\newtheorem{theorem}{Theorem}[section]
\newtheorem{lemma}[theorem]{Lemma}
\newtheorem{proposition}[theorem]{Proposition}
\newtheorem{corollary}[theorem]{Corollary}
\theoremstyle{definition}
\theoremstyle{remark}
\numberwithin{equation}{section}
\begin{document}
\title[The $L_{-2}$ Minkowski problem]{A flow approach to the $L_{-2}$ Minkowski problem}
\author[M. N. Ivaki]{Mohammad N. Ivaki}
\address{Department of Mathematics and Statistics,
  Concordia University, Montreal, QC, Canada, H3G 1M8}
\curraddr{}
\email{mivaki@mathstat.concordia.ca}
\title[The $L_{-2}$ Minkowski problem]{A flow approach to the $L_{-2}$ Minkowski problem}
\subjclass[2010]{Primary 53C44, 53A04, 52A10, 53A15; Secondary 35K55}
\keywords{$L_{-2}$ Minkowski problem, support function, $p$-affine surface area, affine support function, affine arc-length}
\date{}

\dedicatory{}

\begin{abstract}
We prove that the set of smooth, $\pi$-periodic, positive functions on the unit circle for which the planar $L_{-2}$ Minkowski problem is solvable is dense in the set of all smooth, $\pi$-periodic, positive functions on the unit circle with respect to the $L^{\infty}$ norm. Furthermore, we obtain a necessary condition on the solvability of the even $L_{-2}$ Minkowski problem. At the end, we prove uniqueness of the solutions up to special linear transformations.
\end{abstract}

\maketitle
\section{Introduction}
In differential geometry, the classical Minkowski problem concerns the existence, uniqueness and regularity of closed convex hypersurfaces whose Gauss curvature is prescribed as function of the normals. More generally, the Minkowski problem asks what are the necessary and sufficient conditions on a Borel measure on $\mathbb{S}^{n-1}$ to guarantee that it is the surface area measure of a convex body in $\mathbb{R}^n$.
If the measure $\mu$ has a smooth density $\Phi$ with respect to the Lebesgue measure of the unit sphere $\mathbb{S}^{n-1}$, the Minkowski problem is equivalent to the study of solutions to the following partial differential equation on the unit sphere
$$\det (\bar{\nabla}^2 s + {\hbox{Id}} \, s)=\Phi,$$
where $\bar{\nabla}$ is the covariant derivative  on $\mathbb{S}^{n-1} $ endowed with an orthonormal frame. Note that for a smooth convex body $K$ with support function $s$, the quantity $\det (\bar{\nabla}^2 s + {\hbox{Id}} \, s)$ is the reciprocal of the Gauss curvature of the boundary of $K.$
The answer to the existence and uniqueness of the Minkowski problem is as follows. If the support of $\mu$ is not contained in a great subsphere of $\mathbb{S}^{n-1}$, and it satisfies
$$\int_{\mathbb{S}^{n-1}}zd\mu(z)=0,$$
then it is the surface area of a convex body, and the solution is unique up to a translation. Minkowski himself solved the problem in the category of polyhedrons.
A. D. Alexandrov and others solved the problem in general, however, without any information about the regularity of the (unique) convex hypersurface. Around 1953, L. Nirenberg (in dimension three) and  A. V. Pogorelov (in all dimensions) solved the regularity problem in the smooth category independently. For references, one can see works by Minkowski \cite{M1,M2}, Alexandrov \cite{AD1,AD2,AD3}, Fenchel and Jessen \cite{FJ}, Lewy \cite{Lew1,Lew2}, Nirenberg \cite{Nir}, Calabi \cite{Calabi}, Cheng and Yau \cite{Yau}, Caffarelli et al. \cite{caff}, and others.

In Lutwak's development of Brunn-Minkowski-Firey theory \cite{Lutwak1,Lutwak}, it has been shown  that the Minkowski problem is part of a larger family of problems called the $L_p$ Minkowski problems. In the $L_p$ Brunn-Minkowski-Firey theory, Lutwak introduced the notion of the $L_p$ surface area. Therefore, it is natural to ask what are the necessary and sufficient conditions on a Borel measure on $\mathbb{S}^{n-1}$ which guarantee that it is the $L_p$ surface area measure of a convex body. For $p \geq 1$, and an even measure, existence and uniqueness of the convex body was established by Lutwak \cite{Lutwak1}. If the measure $\mu$ has a smooth density $\Phi$ with respect to the Lebesgue measure of the unit sphere $\mathbb{S}^{n-1}$, the $L_{p}$ problem is equivalent to the study of solutions to the following Monge-Amp\`ere equation on the unit sphere
$$ s^{1-p} \det (\bar{\nabla}^2 s + {\hbox{Id}} \, s)=\Phi,$$  where $\bar{\nabla}$ is the covariant derivative  on $\mathbb{S}^{n-1} $ endowed with an orthonormal frame.
 Note that for $p = 1$ this is the classical Minkowski problem. Solutions to many cases of these generalized problems followed later by Ai, Chou, Andrews, B\"or\"oczky, Chen, Wang, Gage, Guan, Lin, Jiang, Lutwak, Oliker, Yang, Zhang, Stancu, Umanskiy \cite{Chou1,BA4,BA3,BLYZ,Chen,Chou2,G1,G2,Gu,Jiang,LO,LYZ1,LYZ2,S0,S1,S2,U}. The progress in studying $L_p$ Minkowski problems has been extremely fruitful and resulted in many applications to functional inequalities \cite{CL,LYZ1,LYZ2,LYZ3,LYZ4,LYZ5}.
This unified theory relates many problems that, previously, were not connected. Note also that, for constant data $\Phi$, many $L_p$ problems were treated as self-similar solutions of geometric flows \cite{BA6,BA5,BA4,BA3,G1,G2} and others.

The cases $p=-n$ and $p=0$ are quite special and more difficult. The even case $p=0$ has been recently solved by B\"or\"oczky, Lutwak, Yang and Zhang \cite{BLYZ}. Many challenges remain for the problem with $p<1$ and, particularly, for negative $p$. The above partial differential equation with $p \in[-2, 0]$ and $n=2$ has been studied by Chen \cite{Chen} and more recently by Jiang \cite{Jiang} for $\Phi$ not necessarily positive. For $p\leq -2$, some existence results were obtained by Dou and Zhu including generalizing the result obtained by Jiang in the case $p=-2$, \cite{DZ}.

The smooth $L_ {-n}$ Minkowski problem is technically more complex, than the well-known counterpart, the Minkowski problem in the classical differential geometry. It is the problem which seeks necessary and sufficient conditions for the existence of a solution to a particular affine invariant, fully nonlinear partial differential equation. It is essential to say, the term {\em centro} in centro-affine differential geometry emphasizes that, contrary to affine differential geometry or classical differential geometry, Euclidean translations of an object in the ambient space are not allowed. This generates a bothersome obstacle for studying the $L_{-n}$ problem in full generality. Previous investigations of the $L_ {-n}$  Minkowski problem have been restricted to the even $L_ {-n}$ Minkowski problem, e.q., the problem in which it is assumed $\mu$
has the same values on antipodal Borel sets \cite{Chou1,Chen,Chou2,Jiang}.

Let $K$ be a compact, centrally symmetric, strictly convex body, smoothly embedded in $\mathbb{R}^2$. We denote the space of such convex bodies by $\mathcal{K}_{sym}$. Let
 $$x_K:\mathbb{S}^1\to\mathbb{R}^2,$$ be
the Gauss parametrization of $\gamma:=\partial K$, the boundary of $K\in \mathcal{K}_{sym}$, where the origin of the plane is chosen to coincide with the center of symmetry of the body. The support function of $\partial K$ is defined by
 $$s_{\partial K}(z):= \langle x_K(z), z \rangle,$$
for each $z=(\cos \theta, \sin \theta) \in\mathbb{S}^1$. We denote the curvature of $\partial K$ by $\kappa$ and, furthermore, the radius of curvature of the curve $\partial K$ by $\mathfrak{r}$, viewed now as functions on $[0, 2 \pi]$ identified with the unit circle. They are related to the support function by
 $$\frac{1}{\kappa(\nu^{-1}(z))}=\mathfrak{r}[s](\theta):=\frac{\partial^2}{\partial \theta^2}s(\theta)+s(\theta),$$
where $\theta$ is the angle parameter on $\mathbb{S}^1$ as above, and $\nu^{-1}$ is the inverse of the Gauss map of $K.$

Suppose $\Phi:\mathbb{S}^1\to\mathbb{R}^{+}$ is a smooth function. The planar $L_{-2}$ Minkowski problem is equivalent to the study of positive solutions to the following ordinary differential equation on $[0,2\pi]$:
$$ s(s_{\theta\theta}+s)^{\frac{1}{3}}=\Phi.$$
A positive solution to this equation corresponds to the existence of a convex body with support function $s$ and with {\em affine support function} $\Phi$. The function $\Phi$ is called even if $\Phi(z+\pi)=\Phi(z)$ for any $z\in\mathbb{S}^1.$

In this paper, we address the smooth even case of the $L_{-2}$ Minkowski problem. The main result obtained states that, although the $L_ {-2}$ Minkowski problem is not always solvable, we can always find functions that approximately solve the problem with any desired accuracy.
 We prove:
\begin{theorem}[Main theorem]\label{thm: main1}
Given an even, smooth function $\Phi:\mathbb{S}^1\to \mathbb{R}^{+}$, there exists a family of smooth convex bodies $\{K_n\}_{n\in\mathbb{N}}\subset \mathcal{K}_{sym},$ such that
$$\lim_{n\to \infty}\sup\limits_{\mathbb{S}^1}\left|\left|\frac{s}{\kappa^{1/3}}-\Phi\right|\right|=0.$$
Furthermore, if $\Phi$ is $\frac{\pi}{k}$ periodic for $k\geq2$, this family of convex bodies is uniformly bounded and
it converges in the $C^{\infty}$ norm to a smooth convex body whose support function satisfies $s(s_{\theta \theta} + s)^{1/3}=\Phi$.
 \end{theorem}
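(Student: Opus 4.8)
The plan is to produce the bodies $K_n$ as the long-time states of a parabolic flow whose equilibria are exactly the solutions of $s\,\mathfrak r[s]^{1/3}=\Phi$, and to read the dichotomy in the theorem off from whether or not the flow can be kept from degenerating. Writing $\kappa^{1/3}=\mathfrak r^{-1/3}$, I would evolve support functions $s(\cdot,t)$ of bodies in $\mathcal K_{sym}$ (preserving the symmetry of $\Phi$) by the area-preserving flow
\[
\partial_t s=-\,\Phi\,\kappa^{1/3}+\lambda(t)\,s,\qquad \lambda(t)=\frac{1}{2\,\mathrm{Area}(K_t)}\int_{\mathbb S^1}\Phi\,\mathfrak r^{2/3}\,d\theta,
\]
the factor $\lambda$ being the unique choice that keeps the enclosed area fixed. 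Since $-\Phi(s_{\theta\theta}+s)^{-1/3}$ is monotone increasing in $s_{\theta\theta}$, the equation is forward parabolic, so short-time existence, preservation of convexity (by the maximum principle applied to $\mathfrak r$) and preservation of the $\tfrac\pi k$-symmetry of the data are standard. A stationary point satisfies $\Phi\kappa^{1/3}=\lambda s$, which after the homothety normalizing $\lambda$ to $1$ (the equation is not scale invariant, so the area constraint selects the correct homothety) is precisely $s/\kappa^{1/3}=\Phi$.

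To extract convergence I would use a monotone Lyapunov functional, the weighted affine perimeter $\mathcal E[s]=\int_{\mathbb S^1}\Phi\,\mathfrak r^{2/3}\,d\theta$; for constant $\Phi$ this is, up to the weight, the affine length, whose monotonicity under area-preserving affine flows is classical. Interpreting the flow as the gradient flow of $\mathcal E$ in a suitable geometric metric, I would establish a dissipation identity exhibiting $\tfrac{d}{dt}\mathcal E$ as a positively weighted square of the speed $\Phi\kappa^{1/3}-\lambda s$; combined with an affine-isoperimetric-type bound for $\mathcal E$ at fixed area, this traps the flow and forces the speed to tend to $0$ in $L^2$ along some sequence $t_n\to\infty$. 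Because the flow is smooth at each finite time, $K_n:=K_{t_n}$ lies in $\mathcal K_{sym}$ and solves the equation with an error tending to $0$; upgrading this $L^2$-smallness to the $L^\infty$-smallness of $s/\kappa^{1/3}-\Phi$ claimed in the theorem is carried out by interpolation against the higher-derivative bounds available at the times $t_n$. This dynamical mechanism, morally an application of Ekeland's variational principle to $\mathcal E$, yields the density statement for arbitrary even $\Phi$.

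The heart of the second statement is the a priori control that makes the flow converge smoothly. I would prove, in order: a two-sided bound $0<c\le s\le C$ on the support function (inradius and circumradius control); then, using the equation with the area normalization, a two-sided bound $0<c'\le\mathfrak r\le C'$ on the radius of curvature, which renders the flow uniformly parabolic; and finally, via Krylov--Safonov and Schauder estimates, uniform $C^\infty$ bounds. These give long-time existence and confine the flow to a compact subfamily of $\mathcal K_{sym}$; monotonicity of $\mathcal E$ then forces smooth subconvergence to a stationary point, i.e.\ a genuine smooth, strictly convex solution of $s/\kappa^{1/3}=\Phi$. Since under the $\tfrac\pi k$-symmetry this solution is unique up to the normalizing homothety, the whole family $\{K_n\}$ is uniformly bounded and converges in $C^\infty$, as claimed.

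The crux, and the one place where $k\ge2$ is indispensable, is the lower bound $s\ge c>0$. The only mechanism that can destroy it is an area-preserving affine \emph{sliver} degeneration: stretching the body along an $SL(2)$-direction so that the Gauss measure of its thin part concentrates at a pair of antipodal normals $\pm\omega$. Along such a family a direct computation (e.g.\ for ellipses normalized to area $\pi$, where $\mathfrak r=s^{-3}$) shows that $\mathcal E$ tends to the finite value $2\pi\,\Phi(\omega)$; thus the extremum of $\mathcal E$ at fixed area competes against the degeneration values $\{2\pi\,\Phi(\omega):\omega\in\mathbb S^1\}$, and when the degeneration wins the extremum is attained only in the limit and no smooth solution exists, which is the source of the necessary condition for solvability. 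A sliver is only $\pi$-periodic, hence incompatible with the imposed $\tfrac\pi k$-periodicity for $k\ge2$; this symmetry therefore excludes the degeneration and delivers the uniform lower bound on $s$. For merely even $\Phi$ the symmetry is absent, which is exactly why one can only approximate, and I expect proving the quantitative non-collapsing estimate under $k\ge2$, ruling out this near-sliver behaviour uniformly, to be the main technical obstacle.
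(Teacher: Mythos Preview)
Your overall architecture---run a parabolic flow whose self-similar solutions satisfy $\sigma=s\kappa^{-1/3}=\Phi$, use a monotone weighted affine isoperimetric quantity to force the speed to vanish along a time sequence, and invoke the $\tfrac{\pi}{k}$-symmetry to rule out degeneration---matches the paper's. But there is a real gap in the choice of flow and in the dissipation step.

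You propose the weighted affine normal flow $\partial_t s=-\Phi\kappa^{1/3}+\lambda s$, which in the paper's one-parameter family (\ref{e: 1}) is the case $p=1$. The paper deliberately works with $p\in(1,2)$ and weight $\Psi=\Phi^{3p/(p+2)}$, because the crucial estimate (Lemma~\ref{lem: w p-ratio}) yields, after the affine Wirtinger inequality, the extra nonnegative term
\[
\frac{9p^2}{2(p-1)(p+2)}\int_{\gamma}\Bigl(\Psi^{\frac{2(p-1)}{3p}}\sigma^{\frac{2-2p}{p+2}}\Bigr)_{\mathfrak{s}}^{2}\,\Psi^{\frac{2(p+2)}{3p}}\,d\mathfrak{s}.
\]
This term is what forces $\Psi^{\frac{2(p-1)}{3p}}\sigma^{\frac{2-2p}{p+2}}\to\mathrm{const}$, i.e.\ $\sigma\to c\,\Phi$, uniformly along the subsequence (Corollary~\ref{cor: limit of affine support}). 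At $p=1$ the exponent $\tfrac{2-2p}{p+2}$ is zero, so the integrand is the derivative of a constant and the term vanishes identically; all that survives is monotonicity of $\Omega_1^{\Psi}/A^{1/3}$, which by itself does not control the oscillation of $\sigma/\Phi$. Correspondingly, your claimed ``dissipation identity exhibiting $\tfrac{d}{dt}\mathcal E$ as a positively weighted square of the speed $\Phi\kappa^{1/3}-\lambda s$'' is not what one gets: a direct calculation gives
\[
\frac{d}{dt}\int_{\mathbb S^1}\Phi\,\mathfrak r^{2/3}\,d\theta
=\frac{2}{3}\int_{\mathbb S^1}\Bigl[(\Phi\kappa^{1/3})_\theta^2-(\Phi\kappa^{1/3})^2\Bigr]\,d\theta,
\]
which is not a definite quadratic in $\Phi\kappa^{1/3}-\lambda s$. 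So either you must produce a different Lyapunov functional whose dissipation genuinely sees the speed, or move to $p>1$ as the paper does; the latter is exactly the mechanism that makes the argument close.

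Your heuristic for the non-collapsing step under $k\ge2$ (the $\tfrac{\pi}{k}$-symmetry is incompatible with an $SL(2)$ sliver) is correct in spirit, and you rightly flag it as the main obstacle. The paper does this quantitatively and cleanly (Lemma~\ref{lem: john n}): writing the Fourier cosine series of a $\tfrac{\pi}{k}$-periodic support function with enclosed area $\pi$ and using positivity of $\mathfrak r$ together with John's ellipsoid gives explicit two-sided bounds $c_k^{-1}\le s\le C_k$. Finally, your appeal to ``uniqueness up to homothety'' to pass from subsequential to full convergence is not available here: by Section~6, solutions are unique only up to $SL(2)$, so the paper contents itself with convergence along the distinguished sequence $t_k$.
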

To prove our result, we will exploit an $\textbf{SL}(n)$-invariant curvature flow:\\
Let $K_0\in \mathcal{K}_{sym}$. We consider a family $\{K_t\}_t\subset \mathcal{K}_{sym}$ given by the smooth map $x:\mathbb{S}^1\times[0,T)\to \mathbb{R}^2$, which are evolving according to the $p$-weighted centro-affine flow namely,
 \begin{equation}\label{e: 1}
 \frac{\partial}{\partial t}x:=-\Psi(z) s\left(\frac{\kappa}{s^3}\right)^{\frac{p}{p+2}}\, z,~~
 x(\cdot,0)=x_{K_0}(\cdot)
 \end{equation}
for a fixed $p\in(1,2)$. Here $\Psi:\mathbb{S}^1\to\mathbb{R}^{+}$ is a smooth, even function. In this equation $x(\mathbb{S}^1,t)=\partial K_t$

Short time existence for the flow follows from the theory of parabolic partial differential equations. The flow itself, which is defined in a more generality by Stancu in \cite{S}, is new in the class of geometric evolution equations and displays many interesting properties. The long time behavior of the $p$-flow in $\mathbb{R}^2$ is settled by the author in \cite{Ivaki}. It was proved there that the volume preserving $p$-flow evolves any convex body in $ \mathcal{K}_{sym}$ to the unit disk in the Hausdorff distance, module $\textbf{SL}(2).$

\section{Convergence to a point}
In this section we prove that every solution of (\ref{e: 1}) starting from a smooth, symmetric convex body converges to a point in a finite time.

Let us denote the area of $K$ by $A:=A(K)=\displaystyle \frac{1}{2} \int_{\mathbb{S}^1}\frac{s}{\kappa}\, d\theta$ and the $p$-affine length of $K$ by $\Omega_p:=\Omega_p(K)=\displaystyle \int_{\mathbb{S}^1}\frac{s}{\kappa}\left(\frac{\kappa}{s^3}\right)^{\frac{p}{p+2}}d\theta.$
The following evolution equations can be derived by a direct computation.

\begin{lemma}\label{lem: evolution equations} Under the flow (\ref{e: 1}), one has
\begin{equation}\label{e: evolution equation}
\frac{\partial}{\partial t}\mathfrak{r}=-\frac{\partial^2}{\partial\theta^2}\left(\Psi
s^{1-\frac{3p}{p+2}}\mathfrak{r}^{-\frac{p}{p+2}}\right)- \Psi
s^{1-\frac{3p}{p+2}}\mathfrak{r}^{-\frac{p}{p+2}},
\end{equation}
and
\begin{equation}\label{e: volume}
\frac{d}{d
t}A=-\int_{\mathbb{S}^1}\Psi\frac{s}{\kappa}\left(\frac{\kappa}{s^3}\right)^{\frac{p}{p+2}}d\theta.
\end{equation}
\end{lemma}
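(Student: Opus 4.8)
The plan is to reduce both identities to a single scalar evolution equation for the support function $s(\theta,t)$ written in the Gauss (normal-angle) parametrization, and then obtain \eqref{e: evolution equation} and \eqref{e: volume} by elementary differentiation and integration by parts. First I would record the normal speed of \eqref{e: 1} in the form occurring on the right-hand side of \eqref{e: evolution equation}: using $\kappa=1/\mathfrak{r}$,
\[
f:=\Psi\,s\left(\frac{\kappa}{s^3}\right)^{\frac{p}{p+2}}=\Psi\,s^{\,1-\frac{3p}{p+2}}\,\mathfrak{r}^{-\frac{p}{p+2}},
\]
so that \eqref{e: 1} reads $\partial_t x=-f\,z$.

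The key reduction is the evolution of $s$ itself. Since $s(\theta,t)=\langle x(\theta,t),z\rangle$ with $z=(\cos\theta,\sin\theta)$ held fixed in $t$, differentiating gives $\partial_t s=\langle\partial_t x,z\rangle$. The velocity of the point with outer normal $z$ differs from the material velocity only by a tangential term along $z^{\perp}$, which is annihilated by pairing with $z$; hence only the normal part $-f\,z$ contributes and $\partial_t s=-f$. This is the one step that deserves care — the standard fact that a purely normal flow induces $\partial_t s=-(\text{normal speed})$ — but beyond the orthogonality just noted it requires no computation, so I would not expect it to be the real difficulty.

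From here \eqref{e: evolution equation} is immediate: because $\theta$ and $t$ are independent variables, $\partial_t$ commutes with $\partial_\theta^2$, and differentiating $\mathfrak{r}=s_{\theta\theta}+s$ yields
\[
\partial_t\mathfrak{r}=\partial_\theta^2(\partial_t s)+\partial_t s=-\partial_\theta^2 f-f,
\]
which is exactly \eqref{e: evolution equation} once the displayed expression for $f$ is substituted. For \eqref{e: volume} I would start from $A=\frac12\int_{\mathbb{S}^1}\frac{s}{\kappa}\,d\theta=\frac12\int_{\mathbb{S}^1}s\,(s_{\theta\theta}+s)\,d\theta$, differentiate under the integral sign, and integrate by parts twice on $\mathbb{S}^1$ (no boundary terms) to symmetrize, using $\int_{\mathbb{S}^1}s\,\partial_t s_{\theta\theta}\,d\theta=\int_{\mathbb{S}^1}s_{\theta\theta}\,\partial_t s\,d\theta$. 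This collapses the two terms into $\frac{d}{dt}A=\int_{\mathbb{S}^1}(s_{\theta\theta}+s)\,\partial_t s\,d\theta=\int_{\mathbb{S}^1}\mathfrak{r}\,\partial_t s\,d\theta$, and substituting $\partial_t s=-f$ together with $\mathfrak{r}=1/\kappa$ gives $\frac{d}{dt}A=-\int_{\mathbb{S}^1}\Psi\,\frac{s}{\kappa}\left(\frac{\kappa}{s^3}\right)^{\frac{p}{p+2}}d\theta$, which is \eqref{e: volume}.

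Throughout, the argument is local in $t$ and uses only the smoothness and strict convexity of $K_t$ on $[0,T)$ (so that $\mathfrak{r}>0$ and every negative power above is well defined), both supplied by short-time existence. I do not anticipate a genuine obstacle: the only routine bookkeeping is the pair of integrations by parts and tracking the exponent $1-\frac{3p}{p+2}$, and the only conceptual point is the normal-flow identity $\partial_t s=-f$ established in the second paragraph.
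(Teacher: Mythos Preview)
Your proposal is correct and carries out exactly the ``direct computation'' the paper alludes to but does not write down. The reduction $\partial_t s=-f$, followed by $\partial_t\mathfrak{r}=(\partial_t s)_{\theta\theta}+\partial_t s$ and the symmetric integration by parts for $A$, is the standard route and there is nothing to add.
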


 \begin{proposition} \label{cor: strengthned speed}
 The flow (\ref{e: 1}) increases in time the quantity
$$\displaystyle\min_{\theta\in\mathbb{S}^1}\left(\Psi s\left(\frac{\kappa}{s^3}\right)^{\frac{p}{p+2}}\right)(\theta,t).$$
\end{proposition}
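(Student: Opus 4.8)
The plan is to turn the claim into a statement about a scalar parabolic equation and then read off the sign of the time derivative of the minimum via the maximum principle. Write the speed as
$$F:=\Psi s\left(\frac{\kappa}{s^3}\right)^{\frac{p}{p+2}}=\Psi\, s^{\,1-\frac{3p}{p+2}}\,\mathfrak{r}^{-\frac{p}{p+2}},$$
which is precisely the quantity appearing inside the operator in (\ref{e: evolution equation}). Because the Gauss parametrization keeps the normal $z$ fixed, the support function evolves by $\partial_t s=\langle\partial_t x,z\rangle=-F$, while Lemma \ref{lem: evolution equations} can be rephrased as $\partial_t\mathfrak{r}=-(F_{\theta\theta}+F)$. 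For a smooth body in $\mathcal{K}_{sym}$ one has $s>0$, $\mathfrak{r}=1/\kappa>0$ and $F>0$, which will be used throughout.

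The next step is to differentiate $F$ logarithmically. Since $\Psi$ does not depend on $t$, setting $\alpha:=1-\frac{3p}{p+2}=\frac{2(1-p)}{p+2}$ and $\beta:=-\frac{p}{p+2}$ gives $\frac{\partial_t F}{F}=\alpha\frac{\partial_t s}{s}+\beta\frac{\partial_t\mathfrak{r}}{\mathfrak{r}}$. Substituting the two evolution equations above yields
\[
\frac{\partial}{\partial t}F=\frac{2(p-1)}{p+2}\,\frac{F^{2}}{s}+\frac{p}{p+2}\,\frac{F}{\mathfrak{r}}\,\bigl(F_{\theta\theta}+F\bigr).
\]
The purpose of writing the equation in exactly this form is that, for $p\in(1,2)$, both coefficients $\frac{2(p-1)}{p+2}$ and $\frac{p}{p+2}$ are strictly positive; this is where the hypothesis $p>1$ is essential, since the zeroth-order coefficient would change sign otherwise.

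Finally I would apply the standard minimum-principle argument (Hamilton's lemma) to $m(t):=\min_{\theta\in\mathbb{S}^1}F(\cdot,t)$. At a point $\theta_0$ realizing the spatial minimum one has $F_{\theta}(\theta_0,t)=0$ and $F_{\theta\theta}(\theta_0,t)\ge0$, so every term on the right-hand side above is nonnegative at $(\theta_0,t)$, with the first and the last strictly positive because $F,s,\mathfrak{r}>0$. Hence $\frac{d}{dt}m(t)=\partial_t F(\theta_0,t)>0$ (interpreted via the one-sided derivatives supplied by Hamilton's lemma when the minimizer is not unique), which is exactly the assertion that $\min_\theta F$ increases along the flow.

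The only genuinely delicate part is the computation and, above all, the sign bookkeeping of the coefficients: one must verify that both the zeroth-order term (forcing $p>1$) and the coefficient of $F_{\theta\theta}$ are positive, for this is precisely what makes the evolution equation amenable to the minimum principle. The non-smoothness of $t\mapsto\min_\theta F$ is handled routinely, and the positivity of $s$, $\mathfrak{r}$ and $F$ is automatic within $\mathcal{K}_{sym}$.
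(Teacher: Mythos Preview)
Your proof is correct and follows essentially the same route as the paper: both derive the evolution equation $\partial_t F=\frac{2(p-1)}{p+2}\,\frac{F^{2}}{s}+\frac{p}{p+2}\,\frac{F}{\mathfrak{r}}\,(F_{\theta\theta}+F)$ for the speed $F=\Psi s^{1-\frac{3p}{p+2}}\mathfrak{r}^{-\frac{p}{p+2}}$ and then invoke the maximum principle using the positivity of the coefficients for $p\in(1,2)$. Your presentation is slightly more streamlined (writing everything in terms of $F$) and a bit more careful in citing Hamilton's lemma for the one-sided derivative of the minimum, but the substance is identical.
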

\begin{proof}
Using the evolution equations  (\ref{e: 1}) and (\ref{e: evolution equation}), we obtain
\begin{align}\label{e: laplacian}
\frac{\partial}{\partial t}\left(\Psi s^{1-\frac{3p}{p+2}}\mathfrak{r}^{-\frac{p}{p+2}}\right)&=\Psi\left[\left(\frac{\partial}{\partial t}s^{1-\frac{3p}{p+2}}\right)\mathfrak{r}^{-\frac{p}{p+2}}+s^{1-\frac{3p}{p+2}}\frac{\partial}{\partial t}\mathfrak{r}^{-\frac{p}{p+2}}\right]\nonumber\\
&=-\left(1-\frac{3p}{p+2}\right)\Psi
s^{-\frac{3p}{p+2}}\mathfrak{r}^{-\frac{p}{p+2}}\left(\Psi
s^{1-\frac{3p}{p+2}}
\mathfrak{r}^{-\frac{p}{p+2}}\right)\\
&+\frac{p}{p+2}\Psi\mathfrak{r}^{-\frac{p}{p+2}-1}s^{1-\frac{3p}{p+2}}\left[
\left(\Psi s^{1-\frac{3p}{p+2}}\mathfrak{r}^{-\frac{p}{p+2}}\right)_{\theta\theta}+\Psi s^{1-\frac{3p}{p+2}}\mathfrak{r}^{-\frac{p}{p+2}}\right] \nonumber\\
&=\left(\frac{3p}{p+2}-1\right)\Psi^2s^{1-\frac{6p}{p+2}}\mathfrak{r}^{-\frac{2p}{p+2}}+\frac{p}{p+2}\Psi^2s^{2-\frac{6p}{p+2}}
\mathfrak{r}^{-\frac{2p}{p+2}-1}\nonumber\\
&+\frac{p}{p+2}\Psi\mathfrak{r}^{-\frac{p}{p+2}-1}s^{1-\frac{3p}{p+2}}\left(\Psi
s^{1-\frac{3p}{p+2}}\mathfrak{r}^{-\frac{p}{p+2}}\right)_{\theta
\theta}. \nonumber
\end{align}
Applying the maximum principle proves the claim.
\end{proof}
Consequently, we have that
\begin{corollary}\label{cor:Convexity is preserved}
The convexity of the evolving curves is preserved as long as the flow exists.
\end{corollary}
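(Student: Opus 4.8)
The plan is to deduce convexity directly from the uniform positive lower bound on the normal speed supplied by Proposition~\ref{cor: strengthned speed}. Writing the speed of (\ref{e: 1}) as $\sigma:=\Psi\,s(\kappa/s^{3})^{\frac{p}{p+2}}=\Psi\,s^{1-\frac{3p}{p+2}}\mathfrak{r}^{-\frac{p}{p+2}}$ and setting $m_{0}:=\min_{\mathbb{S}^{1}}\sigma(\cdot,0)$, I would first record that $m_{0}>0$: the body $K_{0}\in\mathcal{K}_{sym}$ is smooth and strictly convex, so $\mathfrak{r}(\cdot,0)>0$, while $\Psi>0$ and $s(\cdot,0)>0$ because the origin (the center of symmetry) is an interior point. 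Proposition~\ref{cor: strengthned speed} then gives that $t\mapsto\min_{\mathbb{S}^{1}}\sigma(\cdot,t)$ is nondecreasing, whence $\sigma\geq m_{0}>0$ on the whole existence interval $[0,T)$.

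Next I would translate this into curvature positivity. Convexity of $K_{t}$ is exactly the statement $\mathfrak{r}=s_{\theta\theta}+s>0$, equivalently $\kappa>0$, and since $\Psi,s>0$ (each $K_{t}\in\mathcal{K}_{sym}$ keeps the origin interior), $\sigma$ is a strictly increasing function of $\kappa$ that vanishes precisely at $\kappa=0$; hence $\sigma\geq m_{0}>0$ forbids $\kappa=0$. Solving $\sigma=\Psi s^{1-\frac{3p}{p+2}}\mathfrak{r}^{-\frac{p}{p+2}}\geq m_{0}$ for the radius of curvature yields $\mathfrak{r}\leq(\Psi s^{1-\frac{3p}{p+2}}/m_{0})^{(p+2)/p}$, so $\kappa=1/\mathfrak{r}$ stays bounded away from $0$ and no flat part can appear. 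To exclude the opposite degeneration I would use continuity: $\mathfrak{r}$ is continuous in $(\theta,t)$ and positive at $t=0$, so a loss of convexity would force a first time $t_{0}\in(0,T)$ and a direction $\theta_{0}$ with $\mathfrak{r}(\theta_{0},t_{0})=0$; there $\sigma=\Psi s^{1-\frac{3p}{p+2}}\mathfrak{r}^{-\frac{p}{p+2}}$ blows up, contradicting smoothness of the flow on $[0,T)$. Thus $\mathfrak{r}>0$ throughout $[0,T)$ and each $K_{t}$ is strictly convex.

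The step I expect to require the most care is the logical direction of the estimate. Proposition~\ref{cor: strengthned speed} controls only $\min_{\mathbb{S}^{1}}\sigma$, a one-sided \emph{lower} bound, which on its own rules out the degeneration $\kappa\to0$ (equivalently $\mathfrak{r}\to\infty$, the formation of a flat segment); the genuine failure of convexity is instead the crossing $\mathfrak{r}\to0^{+}$, a curvature blow-up. The point to make precise is that this latter degeneration cannot occur while the flow remains smooth, so that the lower speed bound together with short-time regularity indeed suffices. Making this rigorous rests entirely on the evolution equation (\ref{e: evolution equation}) and the maximum principle applied to $\sigma$ exactly as in the proof of Proposition~\ref{cor: strengthned speed}, which is what pins $\sigma$, and hence $\kappa$, uniformly away from zero.
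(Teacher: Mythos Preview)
Your first two paragraphs are correct and match the paper's argument exactly: the paper's proof is the single observation that the lower bound on the speed $\Psi s(\kappa/s^{3})^{p/(p+2)}\ge m_{0}>0$ from Proposition~\ref{cor: strengthned speed} forces $\kappa$ to stay strictly positive.

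Your third paragraph, however, is an over-elaboration built on a misreading of what ``loss of convexity'' means here. The hypothesis of the corollary is ``as long as the flow exists'': on that interval the solution is smooth, so $\mathfrak r=s_{\theta\theta}+s$ is finite and the equation~(\ref{e: 1}) is well defined; in particular $\mathfrak r>0$ is already part of what it means for the flow to exist. The scenario $\mathfrak r\to0^{+}$ you worry about is a curvature blow-up that would \emph{terminate} the existence interval, not produce a non-convex curve inside it, so no separate argument is needed. The content of the corollary is precisely the implication you derived in paragraph two---the speed bound gives the explicit upper bound $\mathfrak r\le(\Psi s^{1-\frac{3p}{p+2}}/m_{0})^{(p+2)/p}$, i.e.\ a uniform positive lower bound on $\kappa$---and this is how the paper uses it later (see the proof of Lemma~\ref{lem: upper bound for speed} and the paragraph after it). You may safely delete the third paragraph.
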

\begin{proof}
By Proposition ~\ref{cor: strengthned speed}, we have that, as long as the flow exists,
$$\min_{\theta\in\mathbb{S}^1}\Psi s\left(\frac{\kappa}{s^3}\right)^{\frac{p}{p+2}}(\theta,t)\ge
\min_{\theta\in\mathbb{S}^1}\Psi
s\left(\frac{\kappa}{s^3}\right)^{\frac{p}{p+2}}(\theta,0).$$
From this, we conclude  that $\kappa$ remain strictly positive.
\end{proof}
\begin{lemma}\label{lem: volume goes to zero} For any solution to the flow (\ref{e: 1}), the area of $K(t)$, $A(t)$, converges to zero in a finite time $T'$.
\end{lemma}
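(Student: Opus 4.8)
The plan is to bypass the area evolution (\ref{e: volume}) paired with an affine isoperimetric estimate: such an inequality bounds the $p$-affine length $\Omega_p$ from \emph{above} by a power of $A$, which is exactly the wrong direction for forcing extinction (indeed $\Omega_p$ itself collapses as the body shrinks). Instead I would run the maximum principle directly on the support function. Differentiating $s=\langle x,z\rangle$ along (\ref{e: 1}) and using $\kappa=\mathfrak{r}^{-1}$ gives
\[
\frac{\partial}{\partial t}s=-\Psi\, s^{1-\frac{3p}{p+2}}\mathfrak{r}^{-\frac{p}{p+2}},
\]
the negative of the inward normal speed. Since $\Psi$ is smooth and positive on the compact circle, $\Psi\ge\Psi_{\min}>0$, and since convexity is preserved (Corollary~\ref{cor:Convexity is preserved}) we have $\mathfrak{r}>0$ for as long as the flow exists.

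First I would track the spatial maximum $m(t):=\max_{\theta}s(\cdot,t)$. Because the origin is the centre of symmetry, $m(t)$ is the circumradius of $K_t$, so $K_t\subseteq B_{m(t)}$ and hence $A(t)\le\pi\,m(t)^2$. By Hamilton's trick the Lipschitz function $m(t)$ satisfies $\frac{d}{dt}m\le\frac{\partial}{\partial t}s(\theta^\ast,t)$ at any $\theta^\ast$ realizing the maximum. At such $\theta^\ast$ one has $s_{\theta\theta}\le0$, whence $0<\mathfrak{r}=s_{\theta\theta}+s\le s=m$; as the exponent $-\frac{p}{p+2}$ is negative this gives $\mathfrak{r}^{-\frac{p}{p+2}}\ge m^{-\frac{p}{p+2}}$. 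Substituting yields
\[
\frac{d}{dt}m\le-\Psi_{\min}\,m^{1-\frac{3p}{p+2}}m^{-\frac{p}{p+2}}=-\Psi_{\min}\,m^{\frac{2-3p}{p+2}}.
\]

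The key observation is that for $p\in(1,2)$ the exponent $\beta:=\frac{2-3p}{p+2}$ is negative, lying in $\left(-1,-\tfrac13\right)$, so $1-\beta>1$ and this scalar differential inequality forces extinction. Integrating, $m(t)^{1-\beta}\le m(0)^{1-\beta}-(1-\beta)\Psi_{\min}\,t$, so $m(t)\to0$ as $t$ increases to some $T'\le\frac{m(0)^{1-\beta}}{(1-\beta)\Psi_{\min}}<\infty$; equivalently, the maximal existence time cannot exceed this bound. Combined with $A(t)\le\pi\,m(t)^2$, this gives $A(t)\to0$ as $t\to T'$, which is the assertion.

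The only genuinely delicate points are the justification of $\frac{d}{dt}m\le\frac{\partial}{\partial t}s(\theta^\ast)$ for the merely Lipschitz function $m(t)$, handled by Hamilton's trick (Danskin's lemma), and the exponent bookkeeping ensuring that the sign of $\beta$ and the inequality $\mathfrak{r}\le s$ combine in the right direction; both become routine once $\mathfrak{r}>0$ is secured via Corollary~\ref{cor:Convexity is preserved}. Everything else is a one-line integration of the ODE inequality.
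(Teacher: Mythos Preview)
Your argument is correct, but it is genuinely different from the paper's. The paper does \emph{not} try to pair (\ref{e: volume}) with the $p$-affine isoperimetric inequality, so your opening critique is aimed at a strawman. Instead, the paper factors the integrand in (\ref{e: volume}) as $\left(\Psi s\left(\frac{\kappa}{s^3}\right)^{\frac{p}{p+2}}\right)\cdot\frac{1}{\kappa}$, invokes Proposition~\ref{cor: strengthned speed} to bound the first factor below by its initial minimum $\delta$, and then applies the \emph{classical} isoperimetric inequality $L\ge\sqrt{4\pi A}$ to obtain $\frac{d}{dt}A\le-\delta\sqrt{4\pi A}$, which integrates immediately.

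Your route, tracking $m(t)=\max_\theta s$ via Hamilton's trick and the pointwise observation $\mathfrak{r}=s_{\theta\theta}+s\le s$ at the maximum, is more elementary in one respect: it needs only positivity of $\Psi$ and convexity of the evolving curve, not the stronger monotonicity of Proposition~\ref{cor: strengthned speed}. The trade-off is that the paper's approach simultaneously sets up the later argument (Proposition~\ref{prop: length}), where the same speed bound $\delta$ is reused to show $L(t)\to0$; your support-function estimate gives the circumradius bound $m(t)\to0$ directly, which in fact already implies $L(t)\le2\pi m(t)\to0$ and would make Proposition~\ref{prop: length} redundant.
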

\begin{proof} By (\ref{e: volume}), we have
\begin{align*}
\frac{d}{d
t}A=-\int_{\mathbb{S}^1}\Psi\frac{s}{\kappa}\left(\frac{\kappa}{s^3}\right)^{\frac{p}{p+2}}d\theta\leq
-\delta\int_{\mathbb{S}^1}\frac{1}{\kappa}=-\delta L,
\end{align*}
where we used Proposition ~\ref{cor: strengthned speed}, and
$\delta:=\min\limits_{\mathbb{S}^1}\Psi s\left(\frac{\kappa}{s^3}\right)^{\frac{p}{p+2}}(\theta,0).$ On the other hand, by the isoperimetric inequality, we have $L\geq \sqrt{4\pi A}.$ Therefore, we obtain that
\begin{align*}
\frac{d}{dt}A\leq -\delta\sqrt{4\pi A}.
\end{align*}
This last inequality implies
\begin{align*}
\frac{d}{dt}\sqrt{A}\leq -\frac{\delta\sqrt{4\pi
}}{2}
\end{align*}
from which we conclude the statement of the lemma.
\end{proof}
\begin{lemma}\label{lem:decreasing p-affine surface area}
Any solution of the flow (\ref{e: 1}) satisfies $\lim_{t\to T'}\Omega_p(t)=0$.
\end{lemma}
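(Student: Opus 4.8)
The plan is to deduce the vanishing of $\Omega_p$ from the vanishing of the area established in Lemma~\ref{lem: volume goes to zero}, by comparing the two quantities through the $L_p$ affine isoperimetric inequality. First I would record that the $p$-affine length here is exactly Lutwak's $L_p$ affine surface area: writing $\mathfrak{r}=1/\kappa$ for the radius of curvature, the integrand becomes
\begin{align*}
\frac{s}{\kappa}\left(\frac{\kappa}{s^3}\right)^{\frac{p}{p+2}}
= s^{\frac{2-2p}{p+2}}\,\mathfrak{r}^{\frac{2}{p+2}},
\end{align*}
so that $\Omega_p(K)=\int_{\mathbb{S}^1} s^{\frac{2-2p}{p+2}}\mathfrak{r}^{\frac{2}{p+2}}\,d\theta$. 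Since $p\in(1,2)$, the exponent $\frac{2-2p}{p+2}$ lies in $(-1/2,0)$ and the integrand is manifestly positive, so $\Omega_p(K)\ge 0$ for every $K\in\mathcal{K}_{sym}$.

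The key step is the affine isoperimetric inequality for this functional. Because every $K\in\mathcal{K}_{sym}$ is centrally symmetric about the origin, the origin is an interior point and $s>0$; for such centered bodies the $L_p$ affine isoperimetric inequality reads
\begin{align*}
\Omega_p(K)^{\,p+2}\le c_p\,A(K)^{\,2-p},
\end{align*}
with a constant $c_p$ depending only on $p$ and extremized by origin-centered ellipses. I would simply recall this inequality; if a self-contained check of the exponent is wanted, a direct computation for the disk of area $A$ gives $\Omega_p=2\pi^{\frac{2p}{p+2}}A^{\frac{2-p}{p+2}}$, which fixes $c_p$, the value for a general centered ellipse following from the $\textbf{SL}(2)$-invariance of $\Omega_p$. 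Rearranged, the inequality says
\begin{align*}
0\le \Omega_p(K)\le c_p^{\frac{1}{p+2}}\,A(K)^{\frac{2-p}{p+2}}.
\end{align*}

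Finally I would apply this bound along the flow. The exponent $\frac{2-p}{p+2}$ is strictly positive precisely because $p<2$, so the right-hand side is a strictly increasing function of the area that vanishes as $A\to0$. By Lemma~\ref{lem: volume goes to zero}, $A(t)\to0$ as $t\to T'$; substituting this into the displayed two-sided estimate and invoking the squeeze theorem yields $\Omega_p(t)\to0$, which is the assertion. There is no genuine obstacle here, only one point demanding care: the affine isoperimetric inequality must be invoked in the correct centered setting (guaranteed by $K\in\mathcal{K}_{sym}$) and with the correct sign of the exponent, and it is the strict inequality $p<2$ that makes the power of $A$ positive and hence forces $\Omega_p$ down to zero. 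Note that no control on the limiting shape (convergence to a point versus to a segment) is required, since the inequality bounds $\Omega_p$ by a positive power of the area uniformly over $\mathcal{K}_{sym}$.
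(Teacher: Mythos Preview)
Your proposal is correct and follows essentially the same route as the paper: both invoke the $L_p$ affine isoperimetric inequality $\Omega_p^{\,p+2}\le c_p\,A^{\,2-p}$ (the paper cites it from \cite{Lutwak} with the explicit constant $2^{2+p}\pi^{2p}$) and then use $A(t)\to 0$ from Lemma~\ref{lem: volume goes to zero} together with $p<2$ to squeeze $\Omega_p(t)\to 0$. Your additional remarks on the centered setting and the sign of the exponent are accurate and simply flesh out what the paper leaves implicit.
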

\begin{proof} From the $p$-affine isoperimetric inequality in $\mathbb{R}^2$, \cite{Lutwak}, we have
$$0 \leq \Omega_p^{2+p}(t)\leq
2^{2+p}\pi^{2p}A^{2-p} (t),$$
for any $p\ge 1.$

Therefore, the result is a direct consequence of Lemma \ref{lem: volume goes to zero}. We recall that we consider the flow (\ref{e: 1}) for $1<p<2$.
\end{proof}
\begin{proposition}\label{prop: length}
 Let $L(t)$ be the length of $\partial K_t$ as $K_t$ evolves under (\ref{e: 1}). Then $\lim_{t\to T'}L(t)=0.$
\end{proposition}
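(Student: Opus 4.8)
The plan is to establish the single pointwise inequality that the length integrand is dominated by the $p$-affine surface area integrand, so that $L(t)\le c\,\Omega_p(t)$ for a constant $c$ independent of $t$; the statement then follows immediately from Lemma \ref{lem:decreasing p-affine surface area}. Recall that, parametrizing $\partial K_t$ by the normal angle $\theta$, one has $L(t)=\int_{\mathbb{S}^1}\mathfrak{r}\,d\theta$ (this is the formula already used in the proof of Lemma \ref{lem: volume goes to zero}), while
\[
\Omega_p(t)=\int_{\mathbb{S}^1} s^{1-\frac{3p}{p+2}}\,\mathfrak{r}^{\,1-\frac{p}{p+2}}\,d\theta .
\]
So the whole task reduces to comparing the integrands $\mathfrak{r}$ and $s^{1-\frac{3p}{p+2}}\mathfrak{r}^{\,1-\frac{p}{p+2}}$ pointwise.

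The engine is Proposition \ref{cor: strengthned speed}. Writing $\delta:=\min_{\mathbb{S}^1}\Psi s(\kappa/s^3)^{\frac{p}{p+2}}(\cdot,0)>0$ as in Lemma \ref{lem: volume goes to zero}, that proposition guarantees $\Psi\, s^{1-\frac{3p}{p+2}}\mathfrak{r}^{-\frac{p}{p+2}}\ge\delta$ for every $\theta$ and every $t\in[0,T')$. Since $\Psi$ is smooth and positive we have $\Psi\le\max_{\mathbb{S}^1}\Psi<\infty$, and, because $\mathfrak{r}>0$ by Corollary \ref{cor:Convexity is preserved}, the preceding bound rearranges into the uniform pointwise curvature estimate
\[
\mathfrak{r}^{\,\frac{p}{p+2}}\le \frac{\max_{\mathbb{S}^1}\Psi}{\delta}\; s^{1-\frac{3p}{p+2}} .
\]
This is simply a restatement of the preserved lower bound on the flow speed, read as an upper bound on the radius of curvature in terms of a negative power of the support function.

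The key observation is that the exponents now line up exactly: multiplying the last inequality by $\mathfrak{r}^{\,1-\frac{p}{p+2}}$ reproduces precisely the integrand of $\Omega_p$, giving the pointwise domination $\mathfrak{r}\le \frac{\max_{\mathbb{S}^1}\Psi}{\delta}\, s^{1-\frac{3p}{p+2}}\mathfrak{r}^{\,1-\frac{p}{p+2}}$. Integrating over $\mathbb{S}^1$ then yields $L(t)\le \frac{\max_{\mathbb{S}^1}\Psi}{\delta}\,\Omega_p(t)$, and letting $t\to T'$ finishes the proof by Lemma \ref{lem:decreasing p-affine surface area}. The one point requiring care — and the reason that a direct comparison of $L$ with $A$ and $\Omega_p$ through H\"older's inequality does \emph{not} close (the natural H\"older weights for reconstructing $\int\mathfrak{r}\,d\theta$ from $\int s\mathfrak{r}\,d\theta$ and $\int s^{1-\frac{3p}{p+2}}\mathfrak{r}^{\,1-\frac{p}{p+2}}\,d\theta$ sum to $\tfrac32$ rather than $1$) — is to recognize that Proposition \ref{cor: strengthned speed} already supplies exactly the pointwise bound on $\mathfrak{r}$ needed to absorb the length integrand into the $p$-affine surface area integrand; everything else is bookkeeping of exponents.
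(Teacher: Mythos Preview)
Your proof is correct and is essentially the paper's own argument: the paper also bounds $L(t)$ by a constant times $\Omega_p(t)$ via the pointwise lower bound on $s(\kappa/s^3)^{p/(p+2)}$ furnished by Proposition~\ref{cor: strengthned speed}, and then invokes Lemma~\ref{lem:decreasing p-affine surface area}. The only cosmetic difference is that the paper phrases the comparison as $\min_\theta\big(s(\kappa/s^3)^{p/(p+2)}\big)\cdot L\le\Omega_p$, while you unpack the same inequality in terms of $\mathfrak{r}$.
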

\begin{proof}
We observe that
 \begin{equation}\label{ie: trick}
\min_{\theta\in\mathbb{S}^1}\left(s\left(\frac{\kappa}{s^3}\right)^{\frac{p}{2+p}}\right)(\theta,t)\int_{\mathbb{S}^1}\frac{1}{\kappa}\,
d\theta\leq \Omega_{p}(t)=\int_{\mathbb{S}^1}\frac{s}{\kappa}\left(\frac{\kappa}{s^3}\right)^{\frac{p}{2+p}}\, d\theta.
 \end{equation}
Thus, by taking the limit as $t \to T'$ on both sides of inequality (\ref{ie: trick}), and considering Proposition \ref{cor: strengthned speed}, we obtain
$$\lim_{t\to T'}L(t)=\lim_{t\to T'}\int_{\mathbb{S}^1}\frac{1}{\kappa}d\theta=0.$$
\end{proof}
Following an idea from \cite{Tso}, we consider the evolution of a test function to obtain an upper bound on the speed of the flow as long as the inradius of the evolving curve is uniformly bounded from below.
\begin{lemma}\label{lem: upper bound for speed}
If there exists an $ r>0$ such that $s\ge r$ on $[0,T)$, then $\kappa$ is uniformly bounded from above on $[0,T)$.
\end{lemma}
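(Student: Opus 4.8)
The plan is to adapt Tso's auxiliary-function technique. Write $F:=\Psi s^{1-\frac{3p}{p+2}}\mathfrak{r}^{-\frac{p}{p+2}}=\Psi s\left(\frac{\kappa}{s^3}\right)^{\frac{p}{p+2}}$ for the normal speed in (\ref{e: 1}). Since the position is evolved by $\frac{\partial}{\partial t}x=-Fz$ in the Gauss parametrization, the support function satisfies $\frac{\partial}{\partial t}s=\langle\frac{\partial}{\partial t}x,z\rangle=-F\le0$; hence $s$ is pointwise non-increasing and so $s\le R:=\max_{\mathbb{S}^1}s(\cdot,0)$ throughout $[0,T)$. Combined with the hypothesis this confines $s$ to the compact range $r\le s\le R$. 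Because an upper bound on $\kappa$ is equivalent to a lower bound on $\mathfrak{r}$, and $\mathfrak{r}=\left(\Psi s^{1-\frac{3p}{p+2}}/F\right)^{\frac{p+2}{p}}$, it suffices to bound $F$ from above.

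I would fix a constant $c$ with $0<c<r$, so that $s-c\ge r-c>0$, and study the test function $W:=\frac{F}{s-c}$. Using $\frac{\partial}{\partial t}s=-F$ gives $\frac{\partial}{\partial t}W=\frac{1}{s-c}\frac{\partial}{\partial t}F+W^2$, and substituting the evolution equation (\ref{e: laplacian}) for $\frac{\partial}{\partial t}F$ (whose reaction terms are $\left(\frac{3p}{p+2}-1\right)\frac{F^2}{s}+\frac{p}{p+2}\frac{F^2}{\mathfrak{r}}$ and whose second-order term is $\frac{p}{p+2}\Psi\mathfrak{r}^{-\frac{p}{p+2}-1}s^{1-\frac{3p}{p+2}}F_{\theta\theta}$) expresses everything through $W$, $s$, $\mathfrak{r}$ and $\Psi$. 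The next step is to evaluate $\frac{\partial}{\partial t}W$ at a spatial maximum: there $W_\theta=0$ forces $F_\theta=Ws_\theta$, while $W_{\theta\theta}\le0$ gives $F_{\theta\theta}\le Ws_{\theta\theta}=W(\mathfrak{r}-s)$, which is how the maximum principle tames the diffusion term.

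The decisive point is a cancellation. I expect the diffusion contribution to split into a piece that exactly matches the positive reaction term $\frac{p}{p+2}\frac{F^2}{\mathfrak{r}(s-c)}$ and a strictly negative remainder $-\frac{p}{p+2}\frac{cW^2}{\mathfrak{r}}$. After collecting, the surviving positive terms are all of order $W^2$ with coefficients bounded in terms of $r,R,\min\Psi,\max\Psi,p$, whereas the good negative term, rewritten via $\frac{1}{\mathfrak{r}}=\left(\frac{W(s-c)}{\Psi s^{1-\frac{3p}{p+2}}}\right)^{\frac{p+2}{p}}$, is of order $W^{\frac{3p+2}{p}}$ with exponent $\frac{3p+2}{p}>2$ for $1<p<2$. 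Thus at a spatial maximum
$$\frac{\partial}{\partial t}W\le C_1W^2-C_2W^{\frac{3p+2}{p}}$$
for positive constants $C_1,C_2$ depending only on $r,R,c,p$ and on bounds for $\Psi$. By Hamilton's trick, $W_{\max}(t):=\max_{\mathbb{S}^1}W(\cdot,t)$ is locally Lipschitz and obeys the same inequality a.e.; since the negative power dominates, there is a threshold $M$ beyond which the right-hand side is negative, whence $W_{\max}(t)\le\max\{W_{\max}(0),M\}$ on $[0,T)$. This uniform bound on $W$ yields $F=W(s-c)\le\max\{W_{\max}(0),M\}(R-c)$, hence a lower bound on $\mathfrak{r}$ and the desired upper bound on $\kappa$.

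The main obstacle I anticipate is the bookkeeping in the cancellation step: one must verify that the second-order term produced by the maximum principle combines with the reaction terms so as to leave a negative term whose power of $W$ strictly exceeds the quadratic growth of everything else. If the powers merely matched, the argument would only give finite-time control rather than a uniform bound on $[0,T)$, so pinning down the exponent $\frac{3p+2}{p}>2$—which uses $p>0$ together with the precise exponents appearing in the speed—is essential.
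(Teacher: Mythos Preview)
Your approach is correct and is essentially the paper's own argument: both use Tso's test function $Y=F/(s-\rho)$ and the parabolic maximum principle, and your ``cancellation'' is exactly the combination the paper exploits. The only cosmetic difference is in the last step---where you rewrite $-\tfrac{p}{p+2}\tfrac{cW^2}{\mathfrak r}$ as $-C_2W^{(3p+2)/p}$ and invoke an ODE comparison for $W_{\max}$, the paper keeps the $\tfrac{1}{\mathfrak r}$ and, from $\partial_t Y\ge 0$ at the space--time maximum, reads off directly the algebraic bound $\kappa\le 4/\rho$ at that point (which then bounds $Y_{\max}$ and hence $\kappa$ everywhere).
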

\begin{proof}
Define $Y(x,t):=\frac{\Psi s^{1-\frac{3p}{p+2}}\mathfrak{r}^{-\frac{p}{p+2}}}{s-\rho}$, where $\rho=\frac{1}{2}r$. For convenience, we set $\alpha:={1-\frac{3p}{p+2}}$ and $\beta:={-\frac{p}{p+2}}$. At the point where the maximum of $Y$ occurs, we have
$$Y_{\theta}=0, ~~ Y_{\theta\theta}\leq 0,$$
hence we obtain
 \begin{equation}\label{ie: akhar}
\left(\Psi s^{\alpha}\mathfrak{r}^{\beta}\right)_{\theta\theta}+\Psi s^{\alpha}\mathfrak{r}^{\beta}\leq-\Psi\left(\frac{\rho s^{\alpha}\mathfrak{r}^{\beta}-s^{\alpha}\mathfrak{r}^{\beta+1}}{s-\rho}\right).
 \end{equation}
Calculating
$$\frac{\partial}{\partial t}Y= \Psi
\left(\frac{s^{\alpha}}{s-\rho} \frac{\partial
\mathfrak{r}^{\beta}}{\partial
t}+\frac{\mathfrak{r}^{\beta}}{s-\rho} \frac{\partial
s^{\alpha}}{\partial t} -\frac{s^{\alpha} \mathfrak{r}^{\beta}}{(s-
\rho)^2} \frac{\partial s}{\partial t}\right),
$$
and using equation (\ref{e: evolution equation}), and inequality (\ref{ie: akhar}), we infer that, at the point where the maximum of $Y$ is reached, we have
$$0\leq\frac{\partial}{\partial t}Y\leq\frac{\Psi}{s-\rho}\left[{\beta}s^{\alpha}\mathfrak{r}^{\beta-1}\left(\frac{\rho s^{\alpha}\mathfrak{r}^{\beta}- s^{\alpha}\mathfrak{r}^{\beta+1}}{s-\rho}\right)-\alpha\mathfrak{r}^{2\beta}s^{2\alpha-1}+ \frac{ s^{2\alpha}\mathfrak{r}^{2\beta}}{s-\rho}\right].$$
This last inequality gives
$$\beta\rho\kappa-\beta-\alpha+\alpha\rho\frac{1}{s}+1\ge 0.$$ Neglecting the non-positive term $\displaystyle \alpha\rho\frac{1}{s}$, we obtain
$$\beta\rho\kappa-\beta-\alpha+1\ge 0.$$
Note that $\displaystyle\alpha+\beta-1=-\frac{4p}{p+2}$, therefore $\displaystyle 0\le\kappa\leq \frac{4}{\rho}$, consequently, implying the lemma.
\end{proof}

\begin{lemma}
Let $T$ be the maximal time of existence of the solution to the flow (\ref{e: 1}) with a fixed initial body $K_0 \in\mathcal{K}_{sym}$, then $T=T'$.
\end{lemma}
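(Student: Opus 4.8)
The plan is to prove the two inequalities $T\le T'$ and $T\ge T'$ separately, the second being the substantive one. The inequality $T\le T'$ is immediate: on $[0,T)$ the flow produces genuine convex bodies, so $A(t)>0$ there, while Lemma~\ref{lem: volume goes to zero} forces $A\to 0$ at the finite time $T'$, and the flow cannot persist once the enclosed area has vanished. For the reverse inequality I would argue by contradiction, assuming $T<T'$; then $A(t)$ is decreasing on $[0,T)$ and bounded below by $A_\infty:=\lim_{t\to T}A(t)>0$, and the goal is to manufacture uniform a priori estimates that let the flow be continued past $T$, contradicting maximality.

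The first estimate is a two-sided control of the support function. Since $\frac{\partial}{\partial t}s=\langle \partial_t x,z\rangle=-\Psi s(\kappa/s^3)^{p/(p+2)}<0$, the support function is pointwise decreasing, so $\max_{\mathbb S^1}s(\cdot,t)\le \max_{\mathbb S^1}s(\cdot,0)=:R_0$ and every $K_t$ sits inside the disk of radius $R_0$. For the lower bound I would use central symmetry decisively: for $K_t\in\mathcal{K}_{sym}$ centered at the origin the inradius equals $\min_{\mathbb S^1}s(\cdot,t)$, and writing $r_t=\min_{\mathbb S^1}s(\cdot,t)$ one has $K_t$ contained in the intersection of a slab of width $2r_t$ (the minimal-width direction) with the disk of radius $R_0$, whence $A(t)\le 4r_tR_0$. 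Combined with $A(t)\ge A_\infty$ this yields $\min_{\mathbb S^1}s(\cdot,t)\ge A_\infty/(4R_0)=:r>0$ uniformly on $[0,T)$.

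With the inradius bound $s\ge r$ in hand, Lemma~\ref{lem: upper bound for speed} gives $\kappa\le 4/\rho$ with $\rho=r/2$, i.e.\ a uniform upper bound on the curvature. A matching lower bound comes from Proposition~\ref{cor: strengthned speed}: the quantity $\Psi s(\kappa/s^3)^{p/(p+2)}$ stays $\ge\delta:=\min_{\mathbb S^1}(\Psi s(\kappa/s^3)^{p/(p+2)})(\cdot,0)>0$, so raising to the power $(p+2)/p>0$ and inserting $r\le s\le R_0$ and $0<\Psi\le\max\Psi$ gives the pointwise bound $\kappa\ge s^3(\delta/(\Psi s))^{(p+2)/p}\ge c_0>0$. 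Thus the radius of curvature $\mathfrak{r}=1/\kappa$ is pinched between two positive constants, and since $\mathfrak{r}=s_{\theta\theta}+s$ with $s$ already bounded, $s$ is bounded in $C^2$ uniformly on $[0,T)$.

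These bounds make the scalar equation~(\ref{e: evolution equation}) uniformly parabolic with coefficients controlled on $[0,T)$; standard parabolic regularity (Krylov--Safonov to reach $C^{2,\alpha}$, followed by a Schauder bootstrap) then promotes this to uniform $C^\infty$ bounds, so $s(\cdot,t)$ converges in $C^\infty$ to a smooth strictly convex limit as $t\to T$. Short-time existence applied to this limit continues the flow beyond $T$, contradicting the maximality of $T$; hence $T\ge T'$ and therefore $T=T'$. I expect the main obstacle to be the inradius lower bound, i.e.\ converting the positive lower bound on the area into a positive lower bound on $s$: this is precisely the place where central symmetry is indispensable (it forces the inradius to equal $\min s$ and the incenter to sit at the origin), and without it the \emph{centro} constraint of the problem would leave room for the body to become long and thin while keeping its area bounded below.
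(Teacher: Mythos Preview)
Your proof is correct and follows essentially the same route as the paper's: show $T\le T'$, then assume $T<T'$ to obtain a positive lower bound on the area and hence on the inradius, and combine the Tso-type upper curvature bound (Lemma~\ref{lem: upper bound for speed}) with the speed lower bound from Proposition~\ref{cor: strengthned speed} to make the equation uniformly parabolic and extend past $T$. If anything, you are more explicit than the paper in converting the area lower bound into an inradius bound via the slab argument and in spelling out how Proposition~\ref{cor: strengthned speed} yields a uniform positive lower bound on $\kappa$ once $s$ is pinched.
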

\begin{proof}
From Proposition \ref{prop: length}, we know that $T\leq T'.$ Therefore if $T<T'$ we conclude that $A(t)$ has a uniform lower bound which implies that the inradius of the evolving curve is uniformly bounded from below by a constant. Now, Corollary \ref{cor:Convexity is preserved} guarantees a uniform lower bound on the curvature of the evolving curve in the time interval $[0,T).$ On the other hand, Lemma \ref{lem: upper bound for speed} implies a uniform upper bound on the curvature of the evolving curve. Thus, the evolution equation (\ref{e: 1}) is uniformly parabolic on $[0,T)$, and bounds on higher derivatives of the support function follows by \cite{K} and Schauder theory. Hence, we can extend the solution
after time $T$,  contradicting its definition.
\end{proof}
Therefore, we have proved:
\begin{theorem}
Let $T$ be the maximal time of existence of the solution to the flow (\ref{e: 1}) with a fixed initial body $K_0 \in\mathcal{K}_{sym}$, then $K_t$ converges to
the origin.
\end{theorem}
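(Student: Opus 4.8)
The plan is to combine the vanishing of the perimeter established in Proposition \ref{prop: length} with the central symmetry of the evolving bodies to show that the circumradius collapses to zero, which yields Hausdorff convergence to the origin.

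First I would observe that the speed coefficient in the defining relation $\frac{\partial}{\partial t}x = -\Psi(z)\, s\,(\kappa/s^3)^{p/(p+2)}\, z$ is assembled entirely from even quantities, since $\Psi$, $s$, and $\kappa$ are all even functions on $\mathbb{S}^1$. Consequently the antipodal symmetry $x(-z,t) = -x(z,t)$ is preserved along the flow, so the origin remains the center of symmetry of $K_t$ for every $t\in[0,T)$; in particular $0\in K_t$ throughout. This bookkeeping matters because the conclusion asserts convergence to the origin specifically, not to a possibly drifting center.

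Next I would convert the decay of the length into decay of the circumradius. Set $R(t):=\max_{x\in K_t}\abs{x}$ and choose a boundary point $p_t$ realizing this maximum. By central symmetry $-p_t\in K_t$, so the chord joining $-p_t$ to $p_t$ has length $2R(t)$ and lies in $K_t$ by convexity. Since a closed convex curve containing a chord of length $\ell$ has perimeter at least $2\ell$, we obtain $L(t)\ge 4R(t)$, that is,
$$R(t)\le \tfrac{1}{4}L(t).$$

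Finally, Proposition \ref{prop: length} gives $L(t)\to 0$ as $t\to T'=T$, the equality $T=T'$ having just been established, so $R(t)\to 0$. As $K_t$ is contained in the disk of radius $R(t)$ centered at the origin and contains the origin, this forces $K_t\to\{0\}$ in the Hausdorff metric. I do not anticipate any genuine obstacle here; the whole content has been front-loaded into Proposition \ref{prop: length}, and the only step demanding care is verifying that the center of symmetry does not drift, which the evenness of the speed guarantees.
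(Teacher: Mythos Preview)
Your argument is correct and matches the paper's approach: the paper states this theorem immediately after establishing $T=T'$ with the words ``Therefore, we have proved,'' treating it as a direct consequence of Proposition \ref{prop: length}, and your proof simply spells out the obvious passage from $L(t)\to 0$ to Hausdorff collapse at the origin via the elementary bound $L(t)\geq 2\,\mathrm{diam}(K_t)=4R(t)$. The only detail you add beyond what the paper makes explicit is the preservation of central symmetry, which the paper takes for granted since the evolving bodies are assumed to lie in $\mathcal{K}_{sym}$ throughout.
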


\section{Affine differential setting}
In what follows, we find it more appropriate to work in the affine setting and we will now recall several definitions from affine differential geometry. Let $\gamma:\mathbb{S}^1\to\mathbb{R}^2$ be an embedded strictly convex curve with the curve parameter $\theta$. Define $\mathfrak{g}(\theta):=[\gamma_{\theta},\gamma_{\theta\theta}]^{1/3}$, where, for two vectors $u, v$ in $\mathbb{R}^2$, $[u, v]$ denotes the determinant of the matrix with rows $u$ and $v$. The
affine arc-length is then given by
\begin{equation}\label{def: affine arclength}
\mathfrak{s}(\theta):=\int_{0}^{\theta}\mathfrak{g}(\xi)d\xi.
\end{equation}
Furthermore, the affine tangent vector $\mathfrak{t}$, the affine normal vector $\mathfrak{n}$, and the affine curvature are defined, in this order, as follows:
\begin{equation*}
\mathfrak{t}:=\gamma_{\mathfrak{s}},~~~
\mathfrak{n}:=\gamma_{\mathfrak{s}\mathfrak{s}},
~~~\mu:=[\gamma_{\mathfrak{s}\mathfrak{s}},
\gamma_{\mathfrak{s}\mathfrak{s}\mathfrak{s}}].
\end{equation*}
In the affine coordinate ${\mathfrak{s}}$, the following relations hold:
\begin{align}\label{e: some prop of affine setting}
[\gamma_{\mathfrak{s}},\gamma_{\mathfrak{s}\mathfrak{s}}]&=1 \nonumber\\
[\gamma_{\mathfrak{s}},\gamma_{\mathfrak{s}\mathfrak{s}\mathfrak{s}}]&=0\\
[\gamma_{\mathfrak{s}\mathfrak{s}\mathfrak{s}\mathfrak{s}},\gamma_{\mathfrak{s}}]&=\mu\nonumber.
\end{align}
Moreover, it can be easily verified that $\displaystyle \frac{\kappa}{s^3}= \frac{[\gamma_{\theta},\gamma_{\theta\theta}]}{[\gamma,\gamma_{\theta}]^3}=\frac{[\gamma_{\mathfrak{s}},\gamma_{\mathfrak{s}\mathfrak{s}}]}{[\gamma,\gamma_{\mathfrak{s}}
]^3}.$
Since
$[\gamma_{\mathfrak{s}},\gamma_{\mathfrak{s}\mathfrak{s}}]=1$,
we conclude that $\displaystyle\frac{\kappa}{s^3}=\frac{1}{[\gamma,\gamma_{\mathfrak{s}} ]^3}.$ The affine support function is defined by $\sigma:=\frac{s}{\kappa^{1/3}}$, see
\cite{BA1,NS}.

Let $K_0\in \mathcal{K}_{sym}$. We consider the family $\{K_t\}_t\in \mathcal{K}_{sym}$, and their associated smooth embeddings $x:\mathbb{S}^1\times[0,T)\to \mathbb{R}^2$, which are evolving according  to
 \begin{equation}\label{e: 2}
 \frac{\partial}{\partial t}x:=\Psi\sigma^{1-\frac{3p}{p+2}}\mathfrak{n},~~
 x(\cdot,0)=x_{K_0}(\cdot),~~ x(\cdot,t)=x_{K_t}(\cdot)
 \end{equation}
 for a fixed $1< p< 2$.
Observe that up to a time-dependent diffeomorphism the flow defined in (\ref{e: 2}) is equivalent to the flow defined by (\ref{e: 1}).

In terms of affine invariant quantities, the area and the weighted $p$-affine length of $K$ are
 $$A(K)=\frac{1}{2}\int_{\gamma}\sigma d\mathfrak{s},~~\Omega_{p}^{\Psi}(K):=\int_{\gamma}\Psi\sigma^{1-\frac{3p}{p+2}}d\mathfrak{s},$$
where here and thereafter $\gamma$ is the boundary curve of $K$.
\begin{lemma}\label{lem: 1}
Let us define $e$ to be the Euclidean arc-length and $\gamma^t:=\partial K_t$ be the boundary of a convex body $K_t$ evolving under the flow (\ref{e: 2}). Then the following evolution equations hold:
\begin{enumerate}
\item $\displaystyle \frac{\partial}{\partial t}z=\kappa^{\frac{2}{3}}\left(\Psi\sigma^{1-\frac{3p}{p+2}}\right)_{\mathfrak{s}}x_{e},$
\item $\displaystyle \frac{\partial}{\partial t}\Psi=\Psi_{\mathfrak{s}}\left(\Psi\sigma^{1-\frac{3p}{p+2}}\right)_{\mathfrak{s}},$
\item   $\displaystyle\frac{d}{dt} A=-\Omega_{p}^{\Psi },$
\item $\displaystyle\frac{\partial}{\partial t}\mathfrak{g}=\left(-\frac{2}{3}\Psi\sigma^{1-\frac{3p}{p+2}}\mu+
       \frac{1}{3}\left(\Psi\sigma^{1-\frac{3p}{p+2}}\right)_{\mathfrak{s}\mathfrak{s}}\right)\mathfrak{g},$
\item $ \displaystyle \frac{\partial}{\partial t}\mathfrak{t}=\left(-\frac{1}{3}\Psi\sigma^{1-\frac{3p}{p+2}}\mu-\frac{1}{3}\left(\Psi\sigma^{1-\frac{3p}{p+2}}
      \right)_{\mathfrak{s}\mathfrak{s}}\right)\mathfrak{t}+
\left(\Psi\sigma^{1-\frac{3p}{p+2}}\right)_{\mathfrak{s}}\mathfrak{n},$
\item $\displaystyle\frac{\partial}{\partial t}\sigma=-\frac{4}{3}\sigma^{1-\frac{3p}{p+2}}\Psi
       +\frac{1}{3}\sigma^{1-\frac{3p}{p+2}}\Psi\sigma_{\mathfrak{s}\mathfrak{s}}
       -\frac{1}{3}\left(\sigma^{1-\frac{3p}{p+2}}\Psi\right)_{\mathfrak{s}\mathfrak{s}}\sigma
       +\left(\sigma^{1-\frac{3p}{p+2}}\Psi\right)_{\mathfrak{s}}\sigma_{\mathfrak{s}},$
\end{enumerate}
and we have
\begin{align}\label{e: 6}
  \frac{d}{d t}\Omega_{p}^{\Psi}&=
   \frac{2(p-2)}{p+2}\int_{\gamma}\Psi^2\sigma^{1-\frac{6p}{p+2}} d\mathfrak{s}
   +\frac{18p^2}{(p+2)^3}\int_{\gamma}\Psi^2\sigma^{-\frac{6p}{p+2}}\sigma_{\mathfrak{s}}^2d\mathfrak{s}\\\nonumber
   &-\frac{2}{p+2}\int_{\gamma}\Psi_{\mathfrak{s}}^2\sigma^{2-\frac{6p}{p+2}}d\mathfrak{s}
   -\frac{12p}{(p+2)^2}\int_{\gamma}\Psi\Psi_{\mathfrak{s}}\sigma^{1-\frac{6p}{p+2}}\sigma_{\mathfrak{s}}d\mathfrak{s}.\nonumber
 \end{align}
\end{lemma}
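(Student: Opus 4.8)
The plan is to work entirely in the affine frame and to treat every assertion as a consequence of how the time derivative $\partial_t$ interacts with the affine arc-length derivative $\partial_{\mathfrak{s}}$. Throughout I abbreviate the speed by $F:=\Psi\sigma^{1-\frac{3p}{p+2}}$, so the flow reads $\partial_t x=F\mathfrak{n}$ and $F_{\mathfrak{s}},F_{\mathfrak{s}\mathfrak{s}}$ denote its $\mathfrak{s}$-derivatives. First I record the affine Frenet equations: differentiating $[\gamma_{\mathfrak{s}},\gamma_{\mathfrak{s}\mathfrak{s}}]=1$ in $\mathfrak{s}$ recovers $[\gamma_{\mathfrak{s}},\gamma_{\mathfrak{s}\mathfrak{s}\mathfrak{s}}]=0$, so $\gamma_{\mathfrak{s}\mathfrak{s}\mathfrak{s}}$ is a multiple of $\gamma_{\mathfrak{s}}$; pairing with $\gamma_{\mathfrak{s}\mathfrak{s}}$ and using the definition of $\mu$ gives $\mathfrak{t}_{\mathfrak{s}}=\mathfrak{n}$ and $\mathfrak{n}_{\mathfrak{s}}=-\mu\mathfrak{t}$. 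From $\sigma=[\gamma,\mathfrak{t}]$ (which is the relation $\frac{\kappa}{s^3}=[\gamma,\gamma_{\mathfrak{s}}]^{-3}$ rewritten) I obtain $\sigma_{\mathfrak{s}}=[\gamma,\mathfrak{n}]$ and the affine support equation $\sigma_{\mathfrak{s}\mathfrak{s}}+\mu\sigma=1$ by differentiating twice and using the Frenet equations. Finally I note the Euclidean--affine dictionary $d\mathfrak{s}=\kappa^{1/3}\,de=\kappa^{-2/3}\,d\theta$ together with $\langle\mathfrak{t},z\rangle=0$ and $\langle\mathfrak{n},z\rangle=-\kappa^{1/3}$, which follow from $\mathfrak{t}=\kappa^{-1/3}x_e$ and one further $\mathfrak{s}$-differentiation.

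The conceptual core is the commutator. Since $d\mathfrak{s}=\mathfrak{g}\,d\theta$ and $\theta$ is time-independent, one gets $[\partial_t,\partial_{\mathfrak{s}}]=-\psi\,\partial_{\mathfrak{s}}$ with $\psi:=\partial_t\log\mathfrak{g}$. I determine $\psi$ from the requirement that the normalization $[\mathfrak{t},\mathfrak{n}]=1$ be preserved in time. Using the commutator and $\partial_t x=F\mathfrak{n}$ I compute $\partial_t\mathfrak{t}=\partial_{\mathfrak{s}}(F\mathfrak{n})-\psi\mathfrak{t}=(-F\mu-\psi)\mathfrak{t}+F_{\mathfrak{s}}\mathfrak{n}$ and, differentiating once more, a companion expression for $\partial_t\mathfrak{n}$. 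Imposing $0=\partial_t[\mathfrak{t},\mathfrak{n}]=[\partial_t\mathfrak{t},\mathfrak{n}]+[\mathfrak{t},\partial_t\mathfrak{n}]$, which is precisely the vanishing of the trace of the evolution of the affine frame, forces $\psi=\frac{1}{3}F_{\mathfrak{s}\mathfrak{s}}-\frac{2}{3}F\mu$. This is item~(4) (after multiplying $\psi$ by $\mathfrak{g}$), and feeding it back into the formula for $\partial_t\mathfrak{t}$ gives item~(5).

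With $\psi$ in hand the remaining pointwise identities fall out quickly. For item~(1) I use that $\partial_t z=\lambda x_e$ with $\lambda=-\langle z,\partial_e(F\mathfrak{n})\rangle$; expanding $\partial_e(F\mathfrak{n})=\kappa^{1/3}(F_{\mathfrak{s}}\mathfrak{n}-F\mu\mathfrak{t})$ and using $\langle\mathfrak{n},z\rangle=-\kappa^{1/3}$, $\langle\mathfrak{t},z\rangle=0$ yields $\partial_t z=\kappa^{2/3}F_{\mathfrak{s}}x_e$. Item~(2) then follows because $\Psi$ depends only on the normal angle $\phi$: reading $\partial_t\phi=\kappa^{2/3}F_{\mathfrak{s}}$ off item~(1) and using $\Psi_{\mathfrak{s}}=\kappa^{2/3}\Psi_{\phi}$ gives $\partial_t\Psi=\Psi_{\mathfrak{s}}F_{\mathfrak{s}}$. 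Item~(6) comes from differentiating $\sigma=[\gamma,\mathfrak{t}]$ in time, inserting $\partial_t\gamma=F\mathfrak{n}$ and item~(5), and using $[\gamma,\mathfrak{t}]=\sigma$, $[\gamma,\mathfrak{n}]=\sigma_{\mathfrak{s}}$, $[\mathfrak{t},\mathfrak{n}]=1$, after which $\sigma_{\mathfrak{s}\mathfrak{s}}+\mu\sigma=1$ trades the $\mu$-term for a $\sigma_{\mathfrak{s}\mathfrak{s}}$-term and produces the stated four summands. Item~(3) follows either by rewriting the area derivative (\ref{e: volume}) through $d\mathfrak{s}=\kappa^{-2/3}d\theta$ and $\sigma=s\kappa^{-1/3}$, or intrinsically from $A=\frac12\int_{\gamma}\sigma\,d\mathfrak{s}$: a one-line integration by parts of $\frac12\int_{\gamma}(\partial_t\sigma+\psi\sigma)\,d\mathfrak{s}$ using items~(4) and~(6) collapses everything to $-\int_{\gamma}F\,d\mathfrak{s}=-\Omega_p^{\Psi}$.

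The main obstacle is the last identity (\ref{e: 6}). Writing $\Omega_p^{\Psi}=\int_{\mathbb{S}^1}F\mathfrak{g}\,d\theta$, the commutator computation gives $\frac{d}{dt}\Omega_p^{\Psi}=\int_{\gamma}(\partial_tF+\psi F)\,d\mathfrak{s}$, into which I substitute $\partial_tF=\Psi_{\mathfrak{s}}F_{\mathfrak{s}}\,\sigma^{1-\frac{3p}{p+2}}+\frac{2(1-p)}{p+2}\Psi\sigma^{-\frac{3p}{p+2}}\partial_t\sigma$ from items~(2) and~(6), together with $\psi=\frac13F_{\mathfrak{s}\mathfrak{s}}-\frac23F\mu$ from item~(4). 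The difficulty is purely the bookkeeping: every occurrence of $\mu$ must be eliminated via $\mu\sigma=1-\sigma_{\mathfrak{s}\mathfrak{s}}$, and every second-derivative term reduced by repeated integration by parts over the closed curve $\gamma$ (so all boundary terms drop) until only the quadratic expressions in $\Psi,\Psi_{\mathfrak{s}},\sigma,\sigma_{\mathfrak{s}}$ of (\ref{e: 6}) remain. Tracking the exponents, which all reduce to the powers $\sigma^{2-\frac{6p}{p+2}}$, $\sigma^{1-\frac{6p}{p+2}}$, $\sigma^{-\frac{6p}{p+2}}$, and collecting the rational coefficients in $p$ is the only genuinely laborious step; everything else is the short frame computation described above.
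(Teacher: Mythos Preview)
Your argument is correct and covers every item; the only substantive difference from the paper is in how you obtain item~(4). The paper computes $\partial_t\mathfrak{g}^3=\partial_t[\gamma_\theta,\gamma_{\theta\theta}]$ directly in the $\theta$-parametrization, expanding each bracket via the flow equation and the relations $\gamma_{\theta\theta}=\mathfrak{g}^2\gamma_{\mathfrak{s}\mathfrak{s}}+\mathfrak{g}\mathfrak{g}_{\mathfrak{s}}\gamma_{\mathfrak{s}}$, and only afterwards records the commutator $[\partial_t,\partial_{\mathfrak{s}}]=-(\partial_t\log\mathfrak{g})\partial_{\mathfrak{s}}$ to derive item~(5). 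You instead start from the commutator, write $\partial_t\mathfrak{t}=(-F\mu-\psi)\mathfrak{t}+F_{\mathfrak{s}}\mathfrak{n}$ with $\psi=\partial_t\log\mathfrak{g}$ unknown, and determine $\psi$ by imposing $\partial_t[\mathfrak{t},\mathfrak{n}]=0$. This is cleaner: it stays entirely in the affine frame, replaces a brute-force bracket expansion by the single trace condition $3\psi=F_{\mathfrak{s}\mathfrak{s}}-2F\mu$, and yields items~(4) and~(5) simultaneously. The paper's route, on the other hand, is more self-contained in that it does not presuppose the commutator identity and makes the dependence on the original curve parameter explicit. For items~(1)--(3), (6), and the integral identity~(\ref{e: 6}) your outline coincides with the paper's proof (the paper also splits $\frac{d}{dt}\Omega_p^{\Psi}$ into three pieces coming from $\partial_t\Psi$, $\partial_t\sigma^{1-\frac{3p}{p+2}}$, and $\partial_t d\mathfrak{s}$, then integrates by parts and uses $\sigma_{\mathfrak{s}\mathfrak{s}}+\mu\sigma=1$ exactly as you describe).
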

  \begin{proof}
To prove the lemma, we will use repeatedly equations (\ref{e: some prop of affine setting}) without further mention. Recall that
 $d\mathfrak{s}=\kappa^{\frac{1}{3}}de=\mathfrak{r}^{\frac{2}{3}}d\theta.$ \\
\textbf{Proof of} (1): Since $\frac{\partial}{\partial t}z$ is a
tangent vector,
\begin{align*}
\frac{\partial}{\partial t}z&=\langle \frac{\partial}{\partial t}z, x_{e}\rangle x_{e}\\
&=- \langle z, \frac{\partial^2}{\partial e\partial t}x\rangle x_{e}\\
&=-\langle z, \frac{\partial}{\partial e}\left(\Psi\sigma^{1-\frac{3p}{p+2}}\mathfrak{n}\right)\rangle x_{e}\\
&=-\langle z, \mathfrak{n}\rangle \frac{\partial}{\partial e}\left(\Psi\sigma^{1-\frac{3p}{p+2}}\right)x_{e}\\
&=\kappa^{\frac{2}{3}}\left(\Psi\sigma^{1-\frac{3p}{p+2}}\right)_{\mathfrak{s}}x_{e}.\
\end{align*}
\textbf{Proof of} (2): By the evolution equation (1), we have
\begin{align*}
\frac{\partial}{\partial t}\Psi(z)
&=\kappa^{\frac{2}{3}}\Psi_{\theta}\left(\Psi\sigma^{1-\frac{3p}{p+2}}\right)_{\mathfrak{s}}.
\end{align*}
\textbf{Proof of} (3): Note that (3) has been proved in Lemma \ref{e: evolution equation}.\\
\textbf{Proof of} (4):
\begin{align*}
\frac{\partial}{\partial t}\mathfrak{g}^3&=\frac{\partial}{\partial t}\left[\gamma_{\theta},\gamma_{\theta\theta}\right]
=\left[\frac{\partial}{\partial t}\gamma_{\theta},\gamma_{\theta\theta}\right]
+\left[\gamma_{\theta},\frac{\partial}{\partial t}\gamma_{\theta\theta}\right].\\
\end{align*}
We have that
\begin{align*}
\left[\frac{\partial}{\partial t}\gamma_{\theta},\gamma_{\theta\theta}\right]&=\left[\frac{\partial}{\partial \theta}\left(\Psi\sigma^{1-\frac{3p}{p+2}}\gamma_{\mathfrak{s}\mathfrak{s}}\right),\gamma_{\theta\theta}\right]\\
&=\left[\mathfrak{g}\frac{\partial}{\partial \mathfrak{s}}\left(\Psi\sigma^{1-\frac{3p}{p+2}}\gamma_{\mathfrak{s}\mathfrak{s}}\right),\gamma_{\theta\theta}\right]\\
&=\mathfrak{g}\left[\left(\Psi\sigma^{1-\frac{3p}{p+2}}\right)_{\mathfrak{s}}\gamma_{\mathfrak{s}\mathfrak{s}}+
\Psi\sigma^{1-\frac{3p}{p+2}}\gamma_{\mathfrak{s}\mathfrak{s}\mathfrak{s}},\gamma_{\theta\theta}\right].
\end{align*}
Since $\displaystyle\frac{\partial^2}{\partial\theta^2}=\mathfrak{g}\mathfrak{g}_{\mathfrak{s}}\frac{\partial}{\partial
\mathfrak{s}}+\mathfrak{g}^2\frac{\partial^2}{\partial\mathfrak{s}^2}$, we further have
 $\gamma_{\theta\theta}=\mathfrak{g}^2\gamma_{\mathfrak{s}\mathfrak{s}}+\mathfrak{g}\mathfrak{g}_{\mathfrak{s}}\gamma_{\mathfrak{s}}$
and, therefore,
\begin{align*}
\left[\frac{\partial}{\partial
t}\gamma_{\theta},\gamma_{\theta\theta}\right]
&=\mathfrak{g}\left[\left(\Psi\sigma^{1-\frac{3p}{p+2}}\right)_{\mathfrak{s}}\gamma_{\mathfrak{s}\mathfrak{s}}+
\Psi\sigma^{1-\frac{3p}{p+2}}\gamma_{\mathfrak{s}\mathfrak{s}\mathfrak{s}},\mathfrak{g}^2\gamma_{\mathfrak{s}\mathfrak{s}}
+\mathfrak{g}\mathfrak{g}_{\mathfrak{s}}\gamma_{\mathfrak{s}}\right]\\
&=-\mathfrak{g}^2\mathfrak{g}_{\mathfrak{s}}\left(\Psi\sigma^{1-\frac{3p}{p+2}}\right)_{\mathfrak{s}}-
\mathfrak{g}^3\Psi\sigma^{1-\frac{3p}{p+2}}\mu.
\end{align*}
On the other hand, we have
\begin{align*}
\left[\gamma_{\theta},\frac{\partial}{\partial
t}\gamma_{\theta\theta}\right]&=\left[\mathfrak{g}\gamma_{\mathfrak{s}},
\frac{\partial^2}{\partial\theta^2}\left(\Psi\sigma^{1-\frac{3p}{p+2}}\gamma_{\mathfrak{s}\mathfrak{s}}\right)\right]\\
&=\left[\mathfrak{g}\gamma_{\mathfrak{s}},
\mathfrak{g}\mathfrak{g}_{\mathfrak{s}}\frac{\partial}{\partial
\mathfrak{s}}\left(\Psi\sigma^{1-\frac{3p}{p+2}}\gamma_{\mathfrak{s}\mathfrak{s}}\right)+
\mathfrak{g}^2\frac{\partial^2}{\partial\mathfrak{s}^2}\left(\Psi\sigma^{1-\frac{3p}{p+2}}\gamma_{\mathfrak{s}\mathfrak{s}}\right)\right]\\
&=\mathfrak{g}^2\mathfrak{g}_{\mathfrak{s}}\left(\Psi\sigma^{1-\frac{3p}{p+2}}\right)_{\mathfrak{s}}+
\mathfrak{g}^3\left(\Psi\sigma^{1-\frac{3p}{p+2}}\right)_{\mathfrak{s}\mathfrak{s}}-\mathfrak{g}^3\Psi\sigma^{1-\frac{3p}{p+2}}\mu.
\end{align*}
Hence, we conclude that
$$\frac{\partial}{\partial t}\mathfrak{g}^3=\mathfrak{g}^3\left(\Psi\sigma^{1-\frac{3p}{p+2}}\right)_{\mathfrak{s}\mathfrak{s}}-
2\mathfrak{g}^3\Psi\sigma^{1-\frac{3p}{p+2}}\mu,$$
which verifies our fourth claim.\\
\textbf{Proof of} (5): To prove the fifth claim, we observe that
\begin{equation}\label{e: commute}
\frac{\partial}{\partial t}\frac{\partial}{\partial
\mathfrak{s}}=\frac{\partial}{\partial\mathfrak{s}}\frac{\partial}{\partial
t}- \frac{1}{\mathfrak{g}}\frac{\partial\mathfrak{g}}{\partial
t}\frac{\partial}{\partial \mathfrak{s}}.
\end{equation}
By (\ref{e: commute}), we get
\begin{align*}
\frac{\partial}{\partial t}\mathfrak{t}&=\frac{\partial}{\partial t}\frac{\partial}{\partial \mathfrak{s}}\gamma\\
&=\frac{\partial}{\partial \mathfrak{s}}\left(\Psi\sigma^{1-\frac{3p}{p+2}}\gamma_{\mathfrak{s}\mathfrak{s}}\right)+\left(\frac{2}{3}\Psi\sigma^{1-\frac{3p}{p+2}}\mu-\frac{1}{3} \left(\Psi\sigma^{1-\frac{3p}{p+2}}\right)_{\mathfrak{s}\mathfrak{s}}\right)\mathfrak{t}\\
&=\left(\Psi\sigma^{1-\frac{3p}{p+2}}\right)_{\mathfrak{s}}\mathfrak{n}+\Psi\sigma^{1-\frac{3p}{p+2}}\gamma_{\mathfrak{s}\mathfrak{s}\mathfrak{s}}
+\left(\frac{2}{3}\Psi\sigma^{1-\frac{3p}{p+2}}\mu-\frac{1}{3}
\left(\Psi\sigma^{1-\frac{3p}{p+2}}\right)_{\mathfrak{s}\mathfrak{s}}\right)\mathfrak{t}.
\end{align*}
We note that $\gamma_{\mathfrak{s}\mathfrak{s}\mathfrak{s}}=-\mu \gamma_{\mathfrak{s}}$ ending the proof of (5).\\
\textbf{Proof of} (6): We now proceed to prove the sixth claim with
\begin{align*}
\frac{\partial}{\partial t}\sigma=\frac{\partial}{\partial t}\left[\gamma,\gamma_{\mathfrak{s}}\right]=\left[\frac{\partial}{\partial t}\gamma, \gamma_{\mathfrak{s}}\right]+\left[\gamma,\frac{\partial}{\partial t}\gamma_{\mathfrak{s}} \right].
\end{align*}
By the evolution equation (\ref{e: 2}), the evolution equation for $\mathfrak{t}$, and the identities $\sigma=[\gamma,\gamma_{\mathfrak{s}}]$ and
$\sigma_{\mathfrak{s}}=[\gamma,\gamma_{\mathfrak{s}\mathfrak{s}}]$,
we get that
\begin{align*}
\frac{\partial}{\partial t}\sigma&=\left[\Psi\sigma^{1-\frac{3p}{p+2}}\gamma_{\mathfrak{s}\mathfrak{s}},
\gamma_{\mathfrak{s}}\right]+\left[\gamma,
\left(-\frac{1}{3}\Psi\sigma^{1-\frac{3p}{p+2}}\mu-
\frac{1}{3}\left(\Psi\sigma^{1-\frac{3p}{p+2}}\right)_{\mathfrak{s}\mathfrak{s}}\right)\gamma_\mathfrak{s}
+\left(\Psi\sigma^{1-\frac{3p}{p+2}}\right)_{\mathfrak{s}}\gamma_{\mathfrak{s}\mathfrak{s}}\right]\\
&=-\Psi\sigma^{1-\frac{3p}{p+2}}-\frac{1}{3}\Psi\sigma^{2-\frac{3p}{p+2}}\mu-
\frac{1}{3}\left(\Psi\sigma^{1-\frac{3p}{p+2}}\right)_{\mathfrak{s}\mathfrak{s}}\sigma+
\left(\Psi\sigma^{1-\frac{3p}{p+2}}\right)_{\mathfrak{s}}\sigma_{\mathfrak{s}}\\
&=-\frac{4}{3}\Psi\sigma^{1-\frac{3p}{p+2}}
       +\frac{1}{3}\Psi\sigma^{1-\frac{3p}{p+2}}\sigma_{\mathfrak{s}\mathfrak{s}}
       -\frac{1}{3}\left(\Psi\sigma^{1-\frac{3p}{p+2}}\right)_{\mathfrak{s}\mathfrak{s}}\sigma
       +\left(\Psi\sigma^{1-\frac{3p}{p+2}}\right)_{\mathfrak{s}}\sigma_{\mathfrak{s}}
\end{align*}
where we used $\sigma_{\mathfrak{s}\mathfrak{s}}+\sigma\mu=1$ on the second line.\\
\textbf{Proof of} (\ref{e: 6}): The proof follows directly from (2), (4), (6) and arranging similar terms.
\begin{align*}
\frac{d}{dt}\Omega_p^{\Psi}=\underbrace{\int_{\gamma}\left(\frac{\partial}{\partial t}\Psi\right)\sigma^{1-\frac{3p}{p+2}}d\mathfrak{s}}_{I}+
\underbrace{\int_{\gamma}\Psi\left(\frac{\partial}{\partial t}\sigma^{1-\frac{3p}{p+2}}\right)d\mathfrak{s}}_{II}+
\underbrace{\int_{\gamma}\Psi\sigma^{1-\frac{3p}{p+2}}\frac{\partial}{\partial t}d\mathfrak{s}}_{III}.
\end{align*}
We use (2) to compute $I$:
\begin{align*}
I&=\int_{\gamma}\left(\Psi_{\mathfrak{s}}\left(\Psi\sigma^{1-\frac{3p}{p+2}}\right)_{\mathfrak{s}}\right)\sigma^{1-\frac{3p}{p+2}}d\mathfrak{s}
\nonumber\\
&=\int_{\gamma}\Psi_{\mathfrak{s}}^2\sigma^{2-\frac{6p}{p+2}}d\mathfrak{s}
+\left(1-\frac{3p}{p+2}\right)\int_{\gamma}\Psi\Psi_{\mathfrak{s}}\sigma^{1-\frac{6p}{p+2}}\sigma_{\mathfrak{s}}d\mathfrak{s}.
\end{align*}
To simplify $II$ we deploy (6) and integration by parts:
\begin{align*}
II&=\left(1-\frac{3p}{p+2}\right)\int_{\gamma}\Psi\sigma^{-\frac{3p}{p+2}}\left[-\frac{4}{3}\sigma^{1-\frac{3p}{p+2}}\Psi
+\frac{1}{3}\sigma^{1-\frac{3p}{p+2}}\Psi\sigma_{\mathfrak{s}\mathfrak{s}}
-\frac{1}{3}\left(\sigma^{1-\frac{3p}{p+2}}\Psi\right)_{\mathfrak{s}\mathfrak{s}}\sigma
+\left(\sigma^{1-\frac{3p}{p+2}}\Psi\right)_{\mathfrak{s}}\sigma_{\mathfrak{s}}\right]d\mathfrak{s}\\
&=\left(\frac{4p}{p+2}-\frac{4}{3}\right)\int_{\gamma}\Psi^2\sigma^{1-\frac{6p}{p+2}}d\mathfrak{s}
+2\left(\frac{p}{p+2}-\frac{1}{3}\right)\int_{\gamma}\Psi\Psi_{\mathfrak{s}}\sigma^{1-\frac{6p}{p+2}}\sigma_{\mathfrak{s}}d\mathfrak{s}\\
&-\left(\frac{p}{p+2}-\frac{1}{3}\right)\left(\frac{6p}{p+2}-1\right)\int_{\gamma}\Psi^2\sigma^{-\frac{6p}{p+2}}
\sigma_{\mathfrak{s}}^2d\mathfrak{s}
-\left(\frac{p}{p+2}-\frac{1}{3}\right)\int_{\gamma}\left(\sigma^{1-\frac{3p}{p+2}}\Psi\right)_{\mathfrak{s}}^2d\mathfrak{s}\\
&+\left(1-\frac{3p}{p+2}\right)\int_{\gamma}\Psi\Psi_{\mathfrak{s}}\sigma^{1-\frac{6p}{p+2}}\sigma_{\mathfrak{s}}d\mathfrak{s}
+\left(1-\frac{3p}{p+2}\right)^2\int_{\gamma}\Psi^2\sigma^{-\frac{6p}{p+2}}\sigma_{\mathfrak{s}}^2d\mathfrak{s}\\
&=\left(\frac{4p}{p+2}-\frac{4}{3}\right)\int_{\gamma}\Psi^2\sigma^{1-\frac{6p}{p+2}}d\mathfrak{s}
+2\left(\frac{p}{p+2}-\frac{1}{3}\right)\int_{\gamma}\Psi\Psi_{\mathfrak{s}}\sigma^{1-\frac{6p}{p+2}}\sigma_{\mathfrak{s}}d\mathfrak{s}\\
&-\left(\frac{p}{p+2}-\frac{1}{3}\right)\left(\frac{6p}{p+2}-1\right)
\int_{\gamma}\Psi^2\sigma^{-\frac{6p}{p+2}}\sigma_{\mathfrak{s}}^2d\mathfrak{s}
+\left(\frac{1}{3}-\frac{p}{p+2}\right)\left(1-\frac{3p}{p+2}\right)^2
\int_{\gamma}\Psi^2\sigma^{-\frac{6p}{p+2}}\sigma_{\mathfrak{s}}^2d\mathfrak{s}\\
&+2\left(\frac{1}{3}-\frac{p}{p+2}\right)\left(1-\frac{3p}{p+2}\right)
\int_{\gamma}\Psi\Psi_{\mathfrak{s}}\sigma^{1-\frac{6p}{p+2}}\sigma_{\mathfrak{s}}d\mathfrak{s}
+\left(\frac{1}{3}-\frac{p}{p+2}\right)\int_{\gamma}\Psi_{\mathfrak{s}}^2\sigma^{2-\frac{6p}{p+2}}d\mathfrak{s}\\
&+\left(1-\frac{3p}{p+2}\right)\int_{\gamma}\Psi\Psi_{\mathfrak{s}}\sigma^{1-\frac{6p}{p+2}}\sigma_{\mathfrak{s}}d\mathfrak{s}
+\left(1-\frac{3p}{p+2}\right)^2\int_{\gamma}\Psi^2\sigma^{-\frac{6p}{p+2}}\sigma_{\mathfrak{s}}^2d\mathfrak{s}.
\end{align*}
To calculate $III$ we use (4), integration by parts and the identity $\sigma_{\mathfrak{s}\mathfrak{s}}+\sigma\mu=1$:
\begin{align*}
III&=\int_{\gamma}\Psi\sigma^{1-\frac{3p}{p+2}}\left(-\frac{2}{3}\Psi\sigma^{1-\frac{3p}{p+2}}\mu+
       \frac{1}{3}\left(\Psi\sigma^{1-\frac{3p}{p+2}}\right)_{\mathfrak{s}\mathfrak{s}}\right)d\mathfrak{s}\\
&=-\frac{2}{3}\int_{\gamma}\Psi^2\sigma^{2-\frac{6p}{p+2}}\mu d\mathfrak{s}
-\frac{1}{3}\int_{\gamma}\left(\Psi\sigma^{1-\frac{3p}{p+2}}\right)_{\mathfrak{s}}^2d\mathfrak{s}\\
&=-\frac{2}{3}\int_{\gamma}\Psi^2\sigma^{1-\frac{6p}{p+2}}(1-\sigma_{\mathfrak{s}\mathfrak{s}}) d\mathfrak{s}
-\frac{1}{3}\int_{\gamma}\left(\Psi\sigma^{1-\frac{3p}{p+2}}\right)_{\mathfrak{s}}^2d\mathfrak{s}\\
&=-\frac{2}{3}\int_{\gamma}\Psi^2\sigma^{1-\frac{6p}{p+2}} d\mathfrak{s}
-\frac{4}{3}\int_{\gamma}\Psi\Psi_{\mathfrak{s}}\sigma^{1-\frac{6p}{p+2}}\sigma_{\mathfrak{s}}d\mathfrak{s}
+\left(\frac{4p}{p+2}-\frac{2}{3}\right)\int_{\gamma}\Psi^2\sigma^{-\frac{6p}{p+2}}\sigma_{\mathfrak{s}}^2d\mathfrak{s}
\\
&-\frac{1}{3}\int_{\gamma}\left(\Psi\sigma^{1-\frac{3p}{p+2}}\right)_{\mathfrak{s}}^2d\mathfrak{s}\\
&=-\frac{2}{3}\int_{\gamma}\Psi^2\sigma^{1-\frac{6p}{p+2}} d\mathfrak{s}
-\frac{4}{3}\int_{\gamma}\Psi\Psi_{\mathfrak{s}}\sigma^{1-\frac{6p}{p+2}}\sigma_{\mathfrak{s}}d\mathfrak{s}
+\left(\frac{4p}{p+2}-\frac{2}{3}\right)\int_{\gamma}\Psi^2\sigma^{-\frac{6p}{p+2}}\sigma_{\mathfrak{s}}^2d\mathfrak{s}\\
&-\frac{1}{3}\left(1-\frac{3p}{p+2}\right)^2
\int_{\gamma}\Psi^2\sigma^{-\frac{6p}{p+2}}\sigma_{\mathfrak{s}}^2d\mathfrak{s}
-\frac{2}{3}\left(1-\frac{3p}{p+2}\right)
\int_{\gamma}\Psi\Psi_{\mathfrak{s}}\sigma^{1-\frac{6p}{p+2}}\sigma_{\mathfrak{s}}d\mathfrak{s}\\
&-\frac{1}{3}\int_{\gamma}\Psi_{\mathfrak{s}}^2\sigma^{2-\frac{6p}{p+2}}d\mathfrak{s}.
\end{align*}

Adding up $I$, $II$ and $III$, we obtain equation (\ref{e: 6}).
\end{proof}
\begin{lemma}\label{lem: w p-ratio}
The weighted $p$-affine isoperimetric ratio, $\frac{\Omega_p^{\Psi}}{A^{\frac{2-p}{2+p}}}$, is non-decreasing along the flow (\ref{e: 2}) and remains constant if and only if
$K_t$ is a homothetic solution to the flow.
\end{lemma}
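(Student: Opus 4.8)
The plan is to differentiate the ratio, use the area law to eliminate $\frac{d}{dt}A$, and reduce the whole statement to one weighted Poincaré inequality on the evolving curve. Set $q:=\frac{2-p}{2+p}$ and $R:=\Omega_p^{\Psi}A^{-q}$. Since $\frac{d}{dt}A=-\Omega_p^{\Psi}$ by item~(3) of Lemma~\ref{lem: 1}, differentiating gives
\[
\frac{d}{dt}R=A^{-q-1}\Bigl(A\,\tfrac{d}{dt}\Omega_p^{\Psi}+q\,(\Omega_p^{\Psi})^2\Bigr),
\]
so, as $A>0$, everything reduces to showing $\frac{d}{dt}\Omega_p^{\Psi}+q\,(\Omega_p^{\Psi})^2/A\ge 0$, with equality precisely on homothetic solutions.

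Next I would feed formula~(\ref{e: 6}) into this expression and reorganize the right-hand side around the single auxiliary function $\psi:=\Psi\sigma^{-\frac{3p}{p+2}}$. Using $\tfrac{3p}{p+2}\Psi\sigma_{\mathfrak{s}}-\Psi_{\mathfrak{s}}\sigma=-\sigma^{1+\frac{3p}{p+2}}\psi_{\mathfrak{s}}$, the three gradient integrals of~(\ref{e: 6}) assemble, after completing the square, into the manifestly non-negative term $\frac{2}{p+2}\int_{\gamma}\sigma^2\psi_{\mathfrak{s}}^2\,d\mathfrak{s}$. Writing $d\nu:=\sigma\,d\mathfrak{s}$ and $\bar\psi:=\int_{\gamma}\psi\,d\nu/\int_{\gamma}d\nu$, and recalling $\Omega_p^{\Psi}=\int_{\gamma}\psi\,d\nu$ and $A=\tfrac12\int_{\gamma}d\nu$, the surviving zeroth-order integral $\tfrac{2(p-2)}{p+2}\int_{\gamma}\Psi^2\sigma^{1-\frac{6p}{p+2}}\,d\mathfrak{s}$ merges with $q(\Omega_p^{\Psi})^2/A$ to give exactly $-\frac{2(2-p)}{p+2}\int_{\gamma}(\psi-\bar\psi)^2\,d\nu$; this is just the variance identity, whose non-negativity is the Cauchy–Schwarz inequality $(\int\psi\,d\nu)^2\le\int\psi^2\,d\nu\int d\nu$. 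The net result is
\[
\frac{d}{dt}\Omega_p^{\Psi}+q\,\frac{(\Omega_p^{\Psi})^2}{A}
=\frac{2}{p+2}\Bigl(\int_{\gamma}\sigma^2\psi_{\mathfrak{s}}^2\,d\mathfrak{s}-(2-p)\int_{\gamma}(\psi-\bar\psi)^2\,d\nu\Bigr).
\]

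Thus monotonicity is equivalent to the weighted Poincaré inequality $\int_{\gamma}\sigma^2\psi_{\mathfrak{s}}^2\,d\mathfrak{s}\ge(2-p)\int_{\gamma}(\psi-\bar\psi)^2\sigma\,d\mathfrak{s}$, i.e. to the bound $\lambda_1\ge 2-p$ for the first nonzero eigenvalue of the self-adjoint operator $\phi\mapsto-\sigma^{-1}(\sigma^2\phi_{\mathfrak{s}})_{\mathfrak{s}}$ on $L^2(d\nu)$. This is where central symmetry is indispensable: since $K_t\in\mathcal{K}_{sym}$ and $\Psi$ is even, $\sigma$ and hence $\psi$ are periodic with half the affine perimeter $\int_{\gamma}d\mathfrak{s}$, which halves the admissible wavelength and raises the spectrum (for a centered ellipse $\sigma$ is constant and one finds $\lambda_1=4$). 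As $2-p<1$, the required bound then holds, strictly away from ellipses. I expect this eigenvalue estimate, uniform over all centrally symmetric convex curves, to be the main obstacle; I would approach it through the constraint $\sigma_{\mathfrak{s}\mathfrak{s}}+\mu\sigma=1$ (which limits how fast $\sigma$ can oscillate) together with the forced half-period periodicity, rather than by attempting to remove the weight $\sigma$, which no change of variable achieves.

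For the equality statement, $\frac{d}{dt}R=0$ forces equality in the Poincaré inequality; since $\lambda_1>2-p$ this compels $\psi-\bar\psi\equiv 0$, i.e. $\Psi\sigma^{-\frac{3p}{p+2}}$ is constant on $\gamma$. Then the normal speed $\Psi\sigma^{1-\frac{3p}{p+2}}=\psi\,\sigma$ is a fixed multiple of $\sigma$, which is exactly the condition that the affine normal velocity generate a pure dilation modulo $\mathbf{SL}(2)$; hence $K_t$ is a homothetic solution. Conversely, along any homothetic solution the scale-invariant quantity $R$ is constant, which closes the equivalence.
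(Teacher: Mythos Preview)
Your reduction to showing $\dfrac{d}{dt}\Omega_p^{\Psi}+q\,(\Omega_p^{\Psi})^2/A\ge 0$ and your variance computation for the zeroth-order term are both fine, but the key algebraic step is wrong: the three gradient integrals in~(\ref{e: 6}) do \emph{not} assemble into $\frac{2}{p+2}\int_{\gamma}\sigma^{2}\psi_{\mathfrak{s}}^{2}\,d\mathfrak{s}$. Expanding that square gives
\[
\frac{2}{p+2}\,\sigma^{2}\psi_{\mathfrak{s}}^{2}
=\frac{18p^{2}}{(p+2)^{3}}\Psi^{2}\sigma^{-\frac{6p}{p+2}}\sigma_{\mathfrak{s}}^{2}
-\frac{12p}{(p+2)^{2}}\Psi\Psi_{\mathfrak{s}}\sigma^{1-\frac{6p}{p+2}}\sigma_{\mathfrak{s}}
+\frac{2}{p+2}\Psi_{\mathfrak{s}}^{2}\sigma^{2-\frac{6p}{p+2}},
\]
whereas in~(\ref{e: 6}) the coefficient of $\int\Psi_{\mathfrak{s}}^{2}\sigma^{2-\frac{6p}{p+2}}d\mathfrak{s}$ is $-\frac{2}{p+2}$, not $+\frac{2}{p+2}$. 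So your identity is off by an uncontrolled negative term $-\frac{4}{p+2}\int_{\gamma}\Psi_{\mathfrak{s}}^{2}\sigma^{2-\frac{6p}{p+2}}d\mathfrak{s}$, and the proposed weighted Poincar\'e inequality (which you in any case leave unproved, offering only heuristics about half-period symmetry) cannot absorb it.

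The paper proceeds quite differently. Instead of trying to complete the square directly in~(\ref{e: 6}), it rewrites the bad zeroth-order integral via the affine identity $\sigma_{\mathfrak{s}\mathfrak{s}}+\sigma\mu=1$ so as to isolate a term $\int_{\gamma}(\Psi\sigma^{1-\frac{3p}{p+2}})^{2}\mu\,d\mathfrak{s}$, and then applies Andrews' geometric affine--Wirtinger inequality (Lemma~6 of \cite{BA1}), which for centrally symmetric curves bounds $\int f^{2}\mu\,d\mathfrak{s}$ by $\frac{1}{2A}(\int f\,d\mathfrak{s})^{2}+\int f_{\mathfrak{s}}^{2}\,d\mathfrak{s}$. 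This injects an \emph{additional} positive $\int(\Psi\sigma^{1-\frac{3p}{p+2}})_{\mathfrak{s}}^{2}d\mathfrak{s}$ into the estimate, and only then do the gradient terms collect into a genuine nonnegative square, namely $\frac{9p^{2}}{2(p^{2}+p-2)}\int_{\gamma}\bigl(\Psi^{\frac{2(p-1)}{3p}}\sigma^{1-\frac{3p}{p+2}}\bigr)_{\mathfrak{s}}^{2}\Psi^{\frac{2(p+2)}{3p}}d\mathfrak{s}$. The affine--Wirtinger inequality is exactly the missing geometric input; it is the place where central symmetry enters, and it cannot be replaced by the elementary Cauchy--Schwarz/variance argument you attempted.
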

\begin{proof}
Using equation $\sigma_{\mathfrak{s}\mathfrak{s}}+\sigma\mu=1$ which relates the affine curvature $\mu$ to the affine support function, we rewrite the first term in (\ref{e: 6}) as follows:
\begin{equation}\label{e: 5}
\frac{2(p-2)}{p+2}\int_{\gamma}\Psi^2\sigma^{1-\frac{6p}{p+2}}d\mathfrak{s}=
\frac{2(p-2)}{p+2}\int_{\gamma}\Psi^2\sigma^{2-\frac{6p}{p+2}}\mu
d\mathfrak{s}+
\frac{2(p-2)}{p+2}\int_{\gamma}\Psi^2\sigma^{1-\frac{6p}{p+2}}\sigma_{\mathfrak{s}\mathfrak{s}}
d\mathfrak{s}.
\end{equation}
On the other hand, by the affine- geometric Wirtinger inequality Lemma 6, \cite{BA1}, we have
 \begin{equation}
 \int_{\gamma}\Psi^2\sigma^{2-\frac{6p}{p+2}}\mu d\mathfrak{s}\leq \frac{1}{2A}\left(\int_{\gamma}\Psi\sigma^{1-\frac{3p}{p+2}} d\mathfrak{s}\right)^2+\int_{\gamma}\left(\Psi\sigma^{1-\frac{3p}{p+2}}\right)_{\mathfrak{s}}^2d\mathfrak{s}.
 \end{equation}
 Therefore, by equation (\ref{e: 5}), we get
 \begin{equation}\label{ie: 4}
 \int_{\gamma}\Psi^2\sigma^{1-\frac{6p}{p+2}}d\mathfrak{s}\leq \frac{1}{2A}\left(\int_{\gamma}\Psi\sigma^{1-\frac{3p}{p+2}} d\mathfrak{s}\right)^2+\int_{\gamma}\left(\Psi\sigma^{1-\frac{3p}{p+2}}\right)_{\mathfrak{s}}^2d\mathfrak{s}
+\int_{\gamma}\Psi^2\sigma^{1-\frac{6p}{p+2}}\sigma_{\mathfrak{s}\mathfrak{s}}
d\mathfrak{s}.
 \end{equation}

We also have
\begin{align}\label{e: expan1}
\int_{\gamma}\left(\Psi\sigma^{1-\frac{3p}{p+2}}\right)_{\mathfrak{s}}^2d\mathfrak{s}&=\left(1-\frac{3p}{p+2}\right)^2
\int_{\gamma}\Psi^2\sigma^{-\frac{6p}{p+2}}\sigma_{\mathfrak{s}}^2d\mathfrak{s}+
\int_{\gamma}\Psi_{\mathfrak{s}}^2\sigma^{2-\frac{6p}{p+2}}d\mathfrak{s}\nonumber\\
&+
2\left(1-\frac{3p}{p+2}\right)
\int_{\gamma}\Psi\Psi_{\mathfrak{s}}\sigma^{1-\frac{6p}{p+2}}\sigma_{\mathfrak{s}}d\mathfrak{s},
\end{align}
and
\begin{align}\label{e: expan2}
\int_{\gamma}\Psi^2\sigma^{1-\frac{6p}{p+2}}\sigma_{\mathfrak{s}\mathfrak{s}}d\mathfrak{s}
=-2\int_{\gamma}\Psi_{\mathfrak{s}}\Psi\sigma^{1-\frac{6p}{p+2}}\sigma_{\mathfrak{s}}d\mathfrak{s}
+\left(\frac{6p}{p+2}-1\right)\int_{\gamma}\Psi^2\sigma^{-\frac{6p}{p+2}}\sigma_{\mathfrak{s}}^2d\mathfrak{s}.
\end{align}
Hence by combining equation (\ref{e: 6}), inequality (\ref{ie: 4}), equations (\ref{e: expan1}), (\ref{e: expan2}) and collecting similar terms, we obtain
 \begin{align*}
 \frac{d}{d t}\Omega_{p}^{\Psi}&\geq
    \left(\frac{p-2}{p+2}\right)\frac{1}{A}\left(\int_{\gamma}\Psi\sigma^{1-\frac{3p}{p+2}} d\mathfrak{s}\right)^2
   +\frac{18p^2(p-1)}{(p+2)^3}\int_{\gamma}\Psi^2\sigma^{-\frac{6p}{p+2}}\sigma_{\mathfrak{s}}^2d\mathfrak{s}\\\nonumber
   &+\frac{2(p-1)}{p+2}\int_{\gamma}\Psi_{\mathfrak{s}}^2\sigma^{2-\frac{6p}{p+2}}d\mathfrak{s}
   -\frac{12p(p-1)}{(p+2)^2}\int_{\gamma}\Psi\Psi_{\mathfrak{s}}\sigma^{1-\frac{6p}{p+2}}\sigma_{\mathfrak{s}}d\mathfrak{s}.\nonumber
 \end{align*}
 Now, we observe that the last three terms in the previous inequality can be grouped in a term that is almost a perfect square:
 \begin{align*}
 \frac{d}{d t}\Omega_{p}^{\Psi}&\geq
   \left(\frac{p-2}{p+2}\right)\frac{1}{A}\left(\int_{\gamma}\Psi\sigma^{1-\frac{3p}{p+2}} d\mathfrak{s}\right)^2
   +\frac{9p^2}{2(p^2+p-2)}\int_{\gamma}\left(\Psi^{\frac{2(p-1)}{3p}}\sigma^{1-\frac{3p}{p+2}}\right)_{\mathfrak{s}}^2
   \Psi^{\frac{2(p+2)}{3p}}d\mathfrak{s}.
 \end{align*}
To finish the proof, we note that by (3) in Lemma \ref{lem: 1} and the previous inequality, we
have
\begin{align}\label{e: p-ratio}
\frac{d}{d t}\frac{\Omega_p^{\Psi}}{A^{\frac{2-p}{2+p}}}(t)
=\frac{1}{A^{\frac{2-p}{2+p}}(t)}\left(\frac{d}{dt}\Omega_p^{\Psi}-\frac{p-2}{p+2}\frac{(\Omega_p^{\Psi})^2}{A}\right)(t)\ge
0\nonumber.
\end{align}
 \end{proof}
\begin{lemma}\label{cor: limsup idea} If $K_t$ evolves by (\ref{e: 2}), the following limit holds as $t$ approaches the extinction time $T$:
\begin{equation}
 \lim\inf_{t\to T} \frac{(\Omega_p^{\Psi})^p}{A^{1-p}}\left[\frac{d}{dt}\Omega_p^{\Psi}-\frac{p-2}{p+2}\frac{(\Omega_p^{\Psi})^2}{A}\right]=0.
 \label{eq:sup}
 \end{equation}
\end{lemma}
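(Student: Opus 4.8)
The plan is to argue by contradiction, exploiting the monotonicity of the weighted $p$-affine isoperimetric ratio from Lemma \ref{lem: w p-ratio}. Write $R(t):=\Omega_p^{\Psi}(t)\,A^{-\frac{2-p}{2+p}}(t)$. By the identity (\ref{e: p-ratio}) the bracketed factor in (\ref{eq:sup}) is precisely $A^{\frac{2-p}{2+p}}\frac{d}{dt}R$, which is non-negative since $R$ is non-decreasing. Substituting $\Omega_p^{\Psi}=R\,A^{\frac{2-p}{2+p}}$ into the prefactor and collecting the powers of $A$ (the exponents combine to $\frac{2p}{2+p}$, as one checks by a short computation), the integrand in (\ref{eq:sup}) simplifies to
$$R^p\,A^{\frac{2p}{2+p}}\,\frac{d}{dt}R\;\ge\;0.$$
Thus the $\liminf$ is automatically $\ge 0$, and it remains only to rule out a strictly positive value.

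Next I would record two facts. First, $A(t)\to 0$ as $t\to T$ by Lemma \ref{lem: volume goes to zero}. Second, $R$ is bounded above: since $\min_{\mathbb{S}^1}\Psi\cdot\Omega_p\le\Omega_p^{\Psi}\le\max_{\mathbb{S}^1}\Psi\cdot\Omega_p$ and the $p$-affine isoperimetric inequality of Lemma \ref{lem:decreasing p-affine surface area} gives $\Omega_p\le 2\pi^{\frac{2p}{2+p}}A^{\frac{2-p}{2+p}}$, we obtain $R\le C:=2\pi^{\frac{2p}{2+p}}\max_{\mathbb{S}^1}\Psi$. Being non-decreasing and bounded above, $R$ tends to a finite limit $R_\infty$, whence
$$\int_0^{T}\frac{dR}{dt}\,dt=R_\infty-R(0)<\infty.$$

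Now suppose the $\liminf$ in (\ref{eq:sup}) were equal to some $2\delta>0$. Then there is $t_0<T$ with $R^p A^{\frac{2p}{2+p}}\frac{dR}{dt}\ge\delta$ on $[t_0,T)$, and since $R\le C$ this forces $\frac{dR}{dt}\ge \frac{\delta}{C^p}A^{-\frac{2p}{2+p}}$. Using (3) of Lemma \ref{lem: 1}, namely $\frac{dA}{dt}=-\Omega_p^{\Psi}=-R\,A^{\frac{2-p}{2+p}}$, I change the variable of integration from $t$ to $A$. The decisive point is that the combined exponent is exactly $-\frac{2p}{2+p}-\frac{2-p}{2+p}=-1$, so
$$\int_{t_0}^{T}A^{-\frac{2p}{2+p}}\,dt=\int_{0}^{A(t_0)}\frac{dA}{R\,A}\ge\frac{1}{C}\int_{0}^{A(t_0)}\frac{dA}{A}=+\infty.$$
Consequently $\int_{t_0}^{T}\frac{dR}{dt}\,dt=+\infty$, contradicting the finiteness above. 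Hence the $\liminf$ must vanish.

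The main obstacle is recognizing, after the algebraic simplification, that the change of variables $t\mapsto A$ converts the hypothetical positive lower bound into a logarithmically divergent integral; the exact cancellation of exponents to $-1$ is what drives the contradiction, and it relies on pairing the area-decay rate $\frac{dA}{dt}=-R\,A^{\frac{2-p}{2+p}}$ against the precise power $A^{\frac{2p}{2+p}}$ produced by the simplification. Everything else is bookkeeping with the two a priori bounds $A\to 0$ and $R\le C$ already available from the preceding results.
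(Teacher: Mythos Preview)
Your proof is correct and follows essentially the same route as the paper: both argue by contradiction, using that the weighted $p$-affine isoperimetric ratio is non-decreasing and bounded above (via the $p$-affine isoperimetric inequality) while $A\to 0$, and obtain a divergent logarithmic integral in $A$. The paper packages this slightly more compactly by writing $\frac{d}{dt}\bigl((\Omega_p^{\Psi})^{2+p}/A^{2-p}\bigr)=-(2+p)\,\frac{d}{dt}\ln A\cdot\bigl(\text{the expression in (\ref{eq:sup})}\bigr)$ and integrating directly, but your unpacked change-of-variables argument is the same mechanism.
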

\begin{proof}
We have
\begin{align*}
\frac{d}{d t}\frac{(\Omega_p^{\Psi})^{2+p}}{A^{2-p}}(t)
&=-\frac{d}{d
t}\ln(A(t))\left[\frac{(\Omega_p^{\Psi})^p}{A^{1-p}}\left((2+p)\frac{d}{dt}\Omega_p^{\Psi}-(p-2)\frac{(\Omega_p^{\Psi})^2}{A}\right)(t)\right].
\end{align*}
If
$$\frac{(\Omega_p^{\Psi})^p}{A^{1-p}}\left[(2+p)\frac{d}{dt}\Omega_p^{\Psi}-(p-2)\frac{(\Omega_p^{\Psi})^2}{A}\right]\geq\varepsilon$$
in a neighborhood of $T$, then
$$\frac{d}{d t}\frac{(\Omega_p^{\Psi})^{2+p}}{A^{2-p}}(t)\ge -\varepsilon \frac{d}{d t}\ln(A(t)).$$
Thus,
$$\frac{(\Omega_p^{\Psi})^{2+p}}{A^{2-p}}(t)\ge \frac{d}{d t}\frac{(\Omega_p^{\Psi})^{2+p}}{A^{2-p}}(t_1)+\varepsilon \ln(A(t_1))-\varepsilon \ln(A(t)),$$
the right hand side goes to infinity as $A(t)$ goes to zero. This contradicts the $p$-affine isoperimetric inequality which states that the left hand side is bounded from above.
\end{proof}

\section{The Normalized flow}
In this section, we study the asymptotic behavior of the evolving curves under a normalized flow corresponding to the evolution described by (\ref{e: 1}). We consider the conventional rescaling such that the area enclosed by the normalized curves is $\pi$ by taking
 $$\tilde{s}_t:=\sqrt{\frac{\pi}{A(t)}}\,s_t,~~~\tilde{\kappa}_t:=\sqrt{\frac{A(t)}{\pi}}\,{\kappa}_t.$$
 One can also define a new time parameter
 $$\tau=\int_{0}^t\left(\frac{\pi}{A(K_t)(\xi)}\right)^{\frac{2p}{p+2}}d\xi$$
 and can easily verify that
\begin{equation}\label{e: normalized flow}
\frac{\partial }{\partial \tau}\,
\tilde{s}=-\Psi\tilde{s}\left(\frac{\tilde{\kappa}}{\tilde{s}^3}\right)^{\frac{p}{p+2}}+
\frac{\Psi\tilde{s}}{2\pi}\,\tilde{\Omega}_p^{\Psi},
\end{equation}
where $\tilde{\Omega}_p^{\Psi}$ stands for the weighted $p$-affine length of $\partial \tilde{K}_t$ having support function $\tilde{s}_t$. More precisely,
$$\tilde{\Omega}_p^{\Psi}(\tau):=\Omega_p^{\Psi}(\tilde{K}_{\tau})=
\int_{\mathbb{S}^1}\Psi\frac{\tilde{s}}{\tilde{\kappa}}\left(\frac{\tilde{\kappa}}{\tilde{s}^3}\right)^{\frac{p}{p+2}}d\theta.$$
However, even in the normalized case, we prefer to work on the finite time interval $[0,T)$.
\begin{corollary} \label{cor: limit of affine support} Let $\{t_k\}_k$ be the sequence of times realizing the limit (\ref{eq:sup}) in Lemma \ref{cor: limsup idea}. Then, there exists a constant $c>0$ such that along the normalized $p$-flow, we have
$$\lim_{t_k\to T}\Psi^{\frac{2(p-1)}{3p}}\sigma^{1-\frac{3p}{p+2}}(t_k)=c.$$
\end{corollary}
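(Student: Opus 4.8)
The plan is to read the conclusion as a statement about the \emph{normalized} affine support function $\tilde{\sigma}=(\pi/A)^{2/3}\sigma$, and to extract it from the monotonicity computation behind Lemma~\ref{lem: w p-ratio} combined with the vanishing sequence from Lemma~\ref{cor: limsup idea}. Writing $f:=\Psi^{\frac{2(p-1)}{3p}}\sigma^{1-\frac{3p}{p+2}}$, the proof of Lemma~\ref{lem: w p-ratio} shows not merely that the bracket in \eqref{eq:sup} is non-negative, but that it dominates the almost-perfect-square term
$$\frac{d}{dt}\Omega_p^{\Psi}-\frac{p-2}{p+2}\frac{(\Omega_p^{\Psi})^2}{A}\ \ge\ \frac{9p^2}{2(p^2+p-2)}\int_{\gamma}f_{\mathfrak{s}}^2\,\Psi^{\frac{2(p+2)}{3p}}\,d\mathfrak{s}\ \ge\ 0,$$
the coefficient being positive because $1<p<2$ forces $p^2+p-2=(p-1)(p+2)>0$. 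Multiplying by the positive weight $\frac{(\Omega_p^{\Psi})^p}{A^{1-p}}$ and invoking Lemma~\ref{cor: limsup idea} along $\{t_k\}$ yields
$$\lim_{t_k\to T}\ \frac{(\Omega_p^{\Psi})^p}{A^{1-p}}\int_{\gamma}f_{\mathfrak{s}}^2\,\Psi^{\frac{2(p+2)}{3p}}\,d\mathfrak{s}=0.$$

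Next I would carry out the scaling bookkeeping. Under $\tilde{s}=(\pi/A)^{1/2}s$ one has $\tilde{\sigma}=(\pi/A)^{2/3}\sigma$ and $d\tilde{\mathfrak{s}}=(\pi/A)^{1/3}d\mathfrak{s}$, so that with $\tilde f:=\Psi^{\frac{2(p-1)}{3p}}\tilde{\sigma}^{1-\frac{3p}{p+2}}$ a direct computation gives
$$G(t):=\int_{\tilde\gamma}\tilde f_{\tilde{\mathfrak{s}}}^2\,\Psi^{\frac{2(p+2)}{3p}}\,d\tilde{\mathfrak{s}}=\big(\text{bounded factor}\big)\cdot\frac{(\Omega_p^{\Psi})^p}{A^{1-p}}\int_{\gamma}f_{\mathfrak{s}}^2\,\Psi^{\frac{2(p+2)}{3p}}\,d\mathfrak{s}.$$
The essential point is that every power of $A$ cancels exactly, and the surviving factor is a fixed constant times $R(t)^{-p}$ with $R(t)=\Omega_p^{\Psi}/A^{(2-p)/(2+p)}$; since $R$ is increasing (Lemma~\ref{lem: w p-ratio}) and bounded above by the affine isoperimetric inequality, this factor is pinched between two positive constants. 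Hence $G(t_k)\to 0$.

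From here I would convert vanishing weighted Dirichlet energy into spatial constancy. As $\Psi$ is smooth and positive on the compact circle, $\Psi^{\frac{2(p+2)}{3p}}\ge c_\Psi>0$, so $\int_{\tilde\gamma}\tilde f_{\tilde{\mathfrak{s}}}^2\,d\tilde{\mathfrak{s}}\to 0$. The case $p=1$ of the affine isoperimetric inequality $\Omega_1^3\le 8\pi^2 A$ bounds the normalized affine perimeter $\oint d\tilde{\mathfrak{s}}=\tilde\Omega_1\le 2\pi$, and Cauchy--Schwarz then gives $\mathrm{osc}_{\tilde\gamma}\,\tilde f\le (2\pi)^{1/2}\big(\int_{\tilde\gamma}\tilde f_{\tilde{\mathfrak{s}}}^2\,d\tilde{\mathfrak{s}}\big)^{1/2}\to 0$, so any subsequential limit of $\tilde f(t_k)$ is constant on $\mathbb{S}^1$. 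Passing to a subsequence (still denoted $t_k$) along which this constant is attained then yields $\tilde f(t_k)\to c$ uniformly.

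Finally I would pin $c$ inside $(0,\infty)$; this is the step I expect to be the main obstacle, since the energy decay alone leaves the level undetermined and a priori permits degeneration of the normalized curves. The upper bound is easy: $\pi=\tilde A=\tfrac12\oint\tilde{\sigma}\,d\tilde{\mathfrak{s}}\le \tfrac12(\max\tilde{\sigma})\oint d\tilde{\mathfrak{s}}\le \pi\max\tilde{\sigma}$ forces $\max\tilde{\sigma}\ge 1$, and because $1-\frac{3p}{p+2}<0$ this bounds $\min\tilde f$, hence $c$, from above. For strict positivity I would argue by contradiction: if $\max\tilde f(t_k)\to 0$, then $\tilde\Omega_p^{\Psi}=\int_{\tilde\gamma}\Psi^{\frac{p+2}{3p}}\tilde f\,d\tilde{\mathfrak{s}}\le C\,\max\tilde f\,\oint d\tilde{\mathfrak{s}}\to 0$, contradicting that $\tilde\Omega_p^{\Psi}=R(t)\,\pi^{(2-p)/(2+p)}$ is bounded below by its positive initial value. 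Thus the obstruction is resolved precisely by feeding the affine isoperimetric bound on $\oint d\tilde{\mathfrak{s}}$ and the monotone two-sided control of $\tilde\Omega_p^{\Psi}$ back into the constancy statement, giving $c>0$ and completing the proof.
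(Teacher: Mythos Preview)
Your proof is correct and follows essentially the same route as the paper: use the perfect-square lower bound from the monotonicity computation in Lemma~\ref{lem: w p-ratio}, feed it into the vanishing liminf of Lemma~\ref{cor: limsup idea}, rescale so that the prefactor $\frac{(\Omega_p^{\Psi})^p}{A^{1-p}}$ becomes a bounded multiple of $(\tilde\Omega_p^{\Psi})^p$, drop the $\Psi$-weight, and convert the vanishing Dirichlet energy into vanishing oscillation via Cauchy--Schwarz together with the affine isoperimetric bound on $\tilde\Omega_1$. The one substantive variant is your argument that $c>0$: the paper invokes the pointwise fact $\min_{\mathbb{S}^1}\tilde\sigma\le 1$ (Lemma~10 of \cite{BA1}) to force $\max\tilde f\ge \big(\min\Psi\big)^{\frac{2(p-1)}{3p}}$ directly, whereas you argue by contradiction through $\tilde\Omega_p^{\Psi}=\int\Psi^{\frac{p+2}{3p}}\tilde f\,d\tilde{\mathfrak{s}}$ and its monotone positive lower bound; both are valid and equally short.
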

\begin{proof}
By Lemma \ref{cor: limsup idea}, we have
$$0=\lim_{t_k\to T} \frac{(\Omega_p^{\Psi})^p}{A^{1-p}}\left[\frac{d}{dt}\Omega_p^{\Psi}-\frac{p-2}{p+2}\frac{(\Omega_p^{\Psi})^2}{A}\right]\geq \lim_{t_k\to T}\frac{c_p(\Omega_p^{\Psi})^{p}}{A^{1-p}}\int_{\gamma}\left(\Psi^{\frac{2(p-1)}{3p}}\sigma^{1-\frac{3p}{p+2}}\right)_{\mathfrak{s}}^2
   \Psi^{\frac{2(p+2)}{3p}}d\mathfrak{s},$$
where $c_p:=\frac{9p^2}{2(p+2)(p-1)}.$
As by Lemma \ref{lem: w p-ratio}, the normalized weighted $p$-affine length $\tilde{\Omega}^{\Psi}_p$ is increasing along the normalized flow and $\Psi$ has a lower bound, we conclude that
$$\lim_{t_k\to T}\int_{\tilde{\gamma}}\left(\Psi^{\frac{2(p-1)}{3p}}\tilde{\sigma}^{1-\frac{3p}{p+2}}\right)_{\tilde{\mathfrak{s}}}^2d\tilde{\mathfrak{s}}=0.$$
We note that, for any $\theta_1, \theta_2 \in {\mathbb{S}^1}$,
\begin{align*}
\left|\int_{\theta_1}^{\theta_2}\left(\Psi^{\frac{2(p-1)}{3p}}\tilde{\sigma}^{1-\frac{3p}{p+2}}\right)_{\theta}\,d\theta
\right| &\leq
\int_{\mathbb{S}^1}\left|\left(\Psi^{\frac{2(p-1)}{3p}}\tilde{\sigma}^{1-\frac{3p}{p+2}}\right)_{\theta}\right|d\theta \\ &=\int_{\tilde{\gamma}}\left|\left(\Psi^{\frac{2(p-1)}{3p}}\tilde{\sigma}^{1-\frac{3p}{p+2}}\right)_{\tilde{\mathfrak{s}}}\right|d\tilde{\mathfrak{s}}\\
&\leq \left(\int_{\tilde{\gamma}}\left(\Psi^{\frac{2(p-1)}{3p}}
\tilde{\sigma}^{1-\frac{3p}{p+2}}\right)_{\tilde{\mathfrak{s}}}^2d\tilde{\mathfrak{s}}\right)^{1/2}
\tilde{\Omega}_1^{1/2}.
\end{align*}
Take $\theta_1$ and $\theta_2$ be two points where $\Psi^{\frac{2(p-1)}{3p}}\tilde{\sigma}^{1-\frac{3p}{p+2}}$ reaches its extremal values. It is known that, for a smooth, simple curve with enclosed area $\pi$, $\min_{\mathbb{S}^1}\sigma \leq 1$ and $\max_{\mathbb{S}^1}\sigma \geq 1$, see Lemma 10 in \cite{BA1}. Hence, as $\tilde{\Omega}_1$ is bounded from above by the classical affine isoperimetric inequality \cite{Lutwak}, we infer that
$$\lim_{t_k\to T}\Psi^{\frac{2(p-1)}{3p}}\tilde{\sigma}^{1-\frac{3p}{p+2}}(t_k)=c,$$ for some constant $c.$
\end{proof}
 The following lemma will be needed in the proof of the main theorem.
\begin{lemma}\label{lem: john n}
Let $s$ be the support function of a $\frac{\pi}{k}$ $(k\geq 2)$ periodic, smooth convex curve $\gamma$ of enclosed area $\pi$. Then there exist uniform lower and upper bounds on $s$ depending only on $k$.
\end{lemma}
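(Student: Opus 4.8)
The plan is to exploit the rotational symmetry encoded by the $\frac{\pi}{k}$-periodicity of $s$ and to trap $\gamma$ between two regular $2k$-gons whose sizes are controlled by the area normalization $A=\pi$. First I would record the symmetry. Since $s(\theta+\pi/k)=s(\theta)$, the body $K$ bounded by $\gamma$ is invariant under the rotation $\mathcal{R}$ by angle $\pi/k$ about the origin, so it carries the full $2k$-fold rotational symmetry, and $2k\ge 4$; in particular, taking the $k$-th iterate gives $s(\theta+\pi)=s(\theta)$, so $K$ is centrally symmetric with center at the origin. Consequently $\min_{\mathbb{S}^1}s$ equals the inradius of $K$, because the disk of radius $\rho$ about the origin lies in $K$ if and only if $\rho\le s(\theta)$ for every $\theta$, while $\max_{\mathbb{S}^1}s$ is at most the circumradius $R:=\max_{\theta}\sqrt{s^2+s_\theta^2}$, since $s(\theta)=\langle x(\theta),(\cos\theta,\sin\theta)\rangle\le|x(\theta)|\le R$.

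For the upper bound I would choose a boundary point $P$ with $|P|=R$ and apply the symmetry: the orbit $\{\mathcal{R}^jP\}_{j=0}^{2k-1}$ consists of $2k$ points, equally spaced with angular gap $\pi/k$, on the circle of radius $R$, all lying on $\gamma$. By convexity their convex hull, a regular $2k$-gon inscribed in that circle, lies inside $K$, so comparing areas yields $\pi=A(K)\ge kR^2\sin(\pi/k)$, whence $\max_{\mathbb{S}^1}s\le R\le\sqrt{\pi/(k\sin(\pi/k))}=:c_2(k)$. For the lower bound I would run the dual argument at a minimum of $s$: if $r:=\min_{\mathbb{S}^1}s=s(\theta_1)$, then by periodicity $s(\theta_1+j\pi/k)=r$ for all $j$, so the supporting lines with outer normals $\theta_1+j\pi/k$ are all at distance $r$ from the origin, and $K$ is contained in the regular circumscribed $2k$-gon with apothem $r$. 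This circumscribed region is bounded precisely because the $2k$ normals, with spacing $\pi/k\le\pi/2$, spread around the whole circle. Comparing areas gives $\pi=A(K)\le 2k\,r^2\tan(\pi/(2k))$, hence $\min_{\mathbb{S}^1}s=r\ge\sqrt{\pi/(2k\tan(\pi/(2k)))}=:c_1(k)>0$. Combining the two estimates produces $c_1(k)\le s\le c_2(k)$ with constants depending only on $k$.

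The argument is short, and its only real subtlety is the bookkeeping of the two polygon comparisons: verifying the inscribed and circumscribed regular $2k$-gon areas, and checking that $\min_{\mathbb{S}^1}s$ is genuinely the inradius, which rests on central symmetry placing the largest inscribed disk at the origin. The essential point where the hypothesis $k\ge 2$ enters is that both comparison polygons must be nondegenerate: for $k=1$ the inscribed ``$2$-gon'' collapses to a segment, since $\sin\pi=0$, and the circumscribed slab is unbounded, since $\tan(\pi/2)=\infty$, matching the fact that a long thin centrally symmetric needle of area $\pi$ admits no such bounds. Thus the hard part is not any individual estimate but invoking the symmetry correctly to produce honest regular polygons sandwiching $K$; once that is in place, the normalization $A=\pi$ does the rest.
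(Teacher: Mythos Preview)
Your argument is correct and genuinely different from the paper's. The paper proceeds in two unrelated steps: for the upper bound it expands $s$ in a Fourier series $s(\theta)=s_0+\sum s_n\cos(2nk\theta)$, uses positivity of $\mathfrak r=s_{\theta\theta}+s$ to bound the coefficients $|s_n|\le 2s_0/(4n^2k^2-1)$, feeds this into the area identity $2\pi=\int \mathfrak r\,s\,d\theta$ to bound $s_0$, and finally invokes $s\le L/4=\pi s_0/2$; for the lower bound it appeals to John's theorem for origin-symmetric bodies, $J\subset\gamma\subset\sqrt{2}\,J$, combined with the upper bound just obtained. Your route replaces both steps by the single idea of trapping $K$ between an inscribed and a circumscribed regular $2k$-gon produced by the rotational symmetry, and then comparing areas. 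This is more elementary (no Fourier series, no John ellipsoid), more self-contained, and it makes transparent exactly where $k\ge 2$ enters: the comparison polygons degenerate when $k=1$. The paper's approach, on the other hand, yields explicit control of all Fourier coefficients, which could be useful if one later needed finer information about $s$ than just $L^\infty$ bounds; your polygon method gives only the sup-norm estimate, which is all the lemma asks for.
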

\begin{proof}
We write the cosine series of $s$, $s(\theta)=s_0+\sum_{n=1}^{\infty} s_n\cos(2nk\theta).$ From this, we conclude that we can represent radius of curvature $\mathfrak{r}$ as follows:
$$\mathfrak{r}=s_0+\sum_{n=1}^{\infty}(1-4n^2k^2)s_{n}\cos(2nk\theta)>0.$$
We will use now the positivity of $\mathfrak{r}$ to find an estimate for the upper bound of $|s_n|.$
$$\int_{\mathbb{S}^1}\mathfrak{r}(1\pm \cos(2nk\theta))d\theta=2\pi s_0\pm \pi s_n(1-4n^2k^2),$$
thus we have
\begin{equation}\label{e: upper coef s}
|s_n|\leq \frac{2s_0}{4n^2k^2-1}~, ~~\forall n\geq 1.
\end{equation}
To find an upper bound for $s$, we use the assumption that $\gamma$ encloses an area of $\pi$ and inequality (\ref{e: upper coef s}).
\begin{align*}
2\pi=\int_{\mathbb{S}^1}\mathfrak{r} sd\theta&=2\pi s_0^2-\pi\sum_{n=1}^{\infty}(4n^2k^2-1)s_n^2\\
&\geq 2\pi\left(1-2\sum_{n=1}^{\infty}\frac{1}{4n^2k^2-1}\right)s_0^2\\
&=2\pi\left(\frac{\pi\cot(\frac{\pi}{2k})}{2k}\right)s_0^2
=:2\pi \frac{1}{c_k}s_0^2.
\end{align*}
Hence, we have
$$s_0\leq \sqrt{c_k}.$$
 On the other hand, we have
\begin{equation}\label{e: upper bound}
s(\theta)=\frac{1}{2}\left(s(\theta)+s(\theta+\pi)\right)\leq \frac{1}{4}L(\gamma)=\frac{1}{2}\pi s_0\leq \frac{\pi\sqrt{c_k}}{2}.
\end{equation}
To find a lower bound for $s$, we use the assumption that $\gamma$ encloses an area of $\pi$, inequality (\ref{e: upper bound}) and the maximal ellipsoid contained in the convex body enclosed by $\gamma.$ Let $J$ denotes the maximal ellipsoid (also known as the John ellipsoid) contained in the convex body enclosed by $\gamma.$ It is know that
\begin{equation}\label{ie: john1}
 J\subset\gamma\subset \sqrt{2}J,
 \end{equation}
 see \cite{John}.
Therefore, $J$ encloses an area of, at least, $\frac{\pi}{2}.$ Suppose the major axis of  $J$ has length $l_1$ and the minor axis of $J$ has length $l_2$.
Hence,
\begin{equation}\label{ie: john2}
\frac{\pi}{2}\leq \pi\frac{l_1l_2}{4}=A(J).
\end{equation}
On the other hand, by (\ref{ie: john1}) and inequality (\ref{e: upper bound}), we know that $l_1\leq\frac{\pi\sqrt{c_k}}{4}.$
Now, as $l_1l_2>2$ by (\ref{ie: john2}), we conclude that $l_2> \frac{2}{\pi\sqrt{c_k}}.$ Once again using (\ref{ie: john1}) implies
$$s(\theta)\geq \frac{l_2}{2}>\frac{1}{\pi\sqrt{c_k}}.$$
\end{proof}

\section{Proof of the main theorem}
In this section we present a proof of the main theorem.
\begin{proof}
Define $\Phi=:\Psi^{\frac{p+2}{3p}}$ in (\ref{e: 1}). Then an appropriate rescaling of the evolving convex bodies and Corollary \ref{cor: limit of affine support} prove the first part of the claim. To prove the second part, we start the flow (\ref{e: 1}) with a curve that whose support function is $\frac{\pi}{k}$ periodic; for example $s(\theta, 0):=1+\varepsilon\cos(2k\theta)$ for $\varepsilon>0$ small enough. Therefore, the solution to the evolution equation (\ref{e: 1}) remains $\frac{\pi}{k}$-periodic. Hence, by the Lemma \ref{lem: john n}, $\tilde{s}$ is bounded. Therefore, Corollary \ref{cor: limit of affine support} and the standard theory of parabolic equations imply the claim.
\end{proof}

We remark that the periodicity of $\Phi$ with period $\frac{\pi}{k}$, $k\geq2$, was also considered in a different way by Chen \cite{Chen}
as a sufficient condition for the solvability of the $L_{-2}$ Minkowski problem.

\section{A necessary condition and the uniqueness of solutions}
In this section, we obtain a necessary condition on the solvability of the even $ L_{-2}$ Minkowski problem, hence showing that the existence of solutions to the problem cannot occur for all $\pi$-periodic smooth functions $\Psi$. Moreover, we will use the initial set up of this section to discuss the uniqueness of solutions to the even $ L_{-2}$ Minkowski problem.
\begin{theorem}\label{thm: necessary condition}
Let $\gamma$ be a smooth, origin-symmetric curve. Assume ${\gamma}:\mathbb{S}^1\to\mathbb{R}^2,$ is the Gauss parametrization of $\gamma$. Then, $\sigma $, the affine support function of $\gamma$, as a function on the unit circle has at least eight critical points, i.e., points at which $\sigma_{\theta}=0.$
\end{theorem}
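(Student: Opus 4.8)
The plan is to prove that the affine support function $\sigma=s/\kappa^{1/3}$ of an origin-symmetric convex curve has at least eight critical points by combining the affine structure equation $\sigma_{\mathfrak{s}\mathfrak{s}}+\sigma\mu=1$ (recalled in Lemma~\ref{lem: 1}) with a symmetry-and-periodicity argument on the affine arc-length circle. First I would pass from the Gauss parameter $\theta$ to the affine arc-length $\mathfrak{s}$. Since $d\mathfrak{s}=\mathfrak{r}^{2/3}\,d\theta$ with $\mathfrak{r}>0$, the map $\theta\mapsto\mathfrak{s}$ is a smooth orientation-preserving diffeomorphism of the circle, so $\sigma_\theta=0$ if and only if $\sigma_{\mathfrak{s}}=0$; counting critical points in either parameter gives the same number. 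Thus it suffices to show $\sigma$, viewed as a function of $\mathfrak{s}$ on the affine-length circle, has at least eight critical points.

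Next I would exploit central symmetry. Because $\gamma$ is origin-symmetric, the antipodal map $\gamma\mapsto-\gamma$ is a symmetry; one checks that this makes $\sigma$, as a function of $\mathfrak{s}$, periodic with half the total affine length $\mathfrak{L}=\int_{\mathbb{S}^1}\mathfrak{r}^{2/3}\,d\theta$, i.e. $\sigma(\mathfrak{s}+\mathfrak{L}/2)=\sigma(\mathfrak{s})$. On a circle a smooth function has an even number of critical points, counting a non-degenerate maximum and minimum as the generic case, so the half-period symmetry already forces the count to be a multiple of $2$ on each half, doubling to the full circle; to reach eight rather than merely four I would bring in the affine isoperimetric constraint. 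The key analytic input is the equation $\sigma_{\mathfrak{s}\mathfrak{s}}=1-\sigma\mu$ together with the integral identities $\int\sigma\,d\mathfrak{s}=2A$ and the affine Gauss--Bonnet-type relation $\int\mu\,d\mathfrak{s}$ that pins down the average behaviour of $\sigma$; at a critical point $\sigma_{\mathfrak{s}}=0$ the sign of $\sigma_{\mathfrak{s}\mathfrak{s}}=1-\sigma\mu$ distinguishes maxima from minima, and I would use this to show that between consecutive extrema the function must oscillate enough that four extrema are incompatible with both the symmetry and the constraint $\oint\sigma_{\mathfrak{s}}\,d\mathfrak{s}=0$.

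The cleanest route to the number eight, which I would pursue as the main line, is a Fourier/Sturm-type comparison. Writing $u=\sigma-\bar\sigma$ for the mean-zero part and testing the equation against the first affine harmonics, the central symmetry eliminates the lowest frequency modes, so the leading nonzero mode of $u$ has frequency at least two relative to the half-period; a function on the circle whose lowest present frequency is $m$ has at least $2m$ critical points, and here the symmetry plus the structure equation push $m$ up to $4$, yielding $2m=8$. Concretely I would show that the components of $\gamma$ itself, being solutions of the affine Frenet system with central symmetry, are orthogonal to the modes that would permit fewer than eight sign changes of $\sigma_{\mathfrak{s}}$; this is the affine analogue of the classical four-vertex (here eight-vertex) theorem.

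The hard part will be making the mode-counting rigorous without assuming non-degeneracy of the critical points: a Sturm-type oscillation argument must be used to conclude that the number of critical points is at least $2m$ even when some are degenerate, and I must verify carefully that central symmetry kills precisely the modes below frequency $4$ rather than below frequency $2$. I expect the decisive step to be identifying which affine-harmonic modes are forced to vanish by the combination of origin-symmetry and the closing conditions on $\gamma$, since it is exactly the vanishing of the frequency-$2$ and frequency-$3$ contributions — not merely the frequency-$1$ contribution one gets from a general closed curve — that upgrades the generic four critical points to eight.
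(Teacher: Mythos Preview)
Your approach is genuinely different from the paper's, and the gap you yourself flag is real and, as stated, unfilled. The paper does not attempt any Fourier/Sturm analysis of $\sigma$ directly. Instead it introduces an auxiliary closed convex curve
\[
\Lambda(\theta)=\left(\int_0^\theta \frac{\cos\alpha}{s^3(\alpha)}\,d\alpha,\ \int_0^\theta \frac{\sin\alpha}{s^3(\alpha)}\,d\alpha\right),
\]
checks (using $s(\theta+\pi)=s(\theta)$) that $\Lambda$ is a closed, centrally symmetric oval with Euclidean curvature $\kappa_\Lambda=s^3$, and then computes its \emph{affine} curvature to be $\mu_\Lambda=s^3(s_{\theta\theta}+s)=\sigma^3$. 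The result then follows in one line from the classical fact (cited from Buchin) that a centrally symmetric oval has at least eight extatic points, i.e.\ points where $(\mu_\Lambda)_\theta=0$; since $\mu_\Lambda=\sigma^3$, these are exactly the critical points of $\sigma$. The whole argument is a reduction to a known affine ``eight-vertex'' theorem via a clever change of curve, not an oscillation argument on $\sigma$ itself.

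In your plan, the half-period symmetry $\sigma(\mathfrak{s}+\mathfrak{L}/2)=\sigma(\mathfrak{s})$ is correct, but by itself it only guarantees four critical points (two on each half). To reach eight via Sturm--Hurwitz you must kill the frequency-$1$ mode of $\sigma_\mathfrak{s}$ on the half-period circle (equivalently, the frequency-$2$ mode on the full circle), and nothing you wrote does this: central symmetry removes odd frequencies only, and the closing conditions for $\gamma$ are orthogonality relations for the \emph{coordinate functions} of $\gamma$ against solutions of $f''+\mu f=0$, not for $\sigma$ or $\sigma_\mathfrak{s}$ against the fixed trigonometric system on the $\mathfrak{s}$-circle. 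Your hope that ``the components of $\gamma$ \ldots\ are orthogonal to the modes that would permit fewer than eight sign changes of $\sigma_\mathfrak{s}$'' conflates two different Sturm--Liouville problems; translating the affine closing data into vanishing Fourier coefficients of $\sigma_\mathfrak{s}$ is precisely the nontrivial content of the extatic-point theorem, so your outline assumes what has to be proved. Either supply those missing orthogonality identities explicitly, or adopt the paper's route: build $\Lambda$, identify $\mu_\Lambda=\sigma^3$, and invoke the eight-extatic-point theorem for symmetric ovals.
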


\begin{proof}
Define a curve $\Lambda:\mathbb{S}^1\to\mathbb{R}$ by
$$\Lambda(\theta):=\left(\int_{0}^{\theta}\frac{\cos\alpha}{s^3(\alpha)}~d\alpha,\int_{0}^{\theta}\frac{\sin\alpha}{s^3(\alpha)}~d\alpha\right)
=:(x,y).$$
As $\gamma$ is origin symmetric,  $s(\theta+\pi)=s(\theta)$ for all $\theta\in\mathbb{S}^1$. This implies $\Lambda(2\pi)=\Lambda(0)=\vec{o}$ and that $\Lambda$ is a closed curve. For convenience set $~~':=\frac{d}{d\theta}$. We compute the Euclidean curvature of $\Lambda$:
 $$\kappa_\Lambda= \frac{|x'y''-y'x''|}{(x'^2+y'^2)^{3/2}}=s^3.$$
Hence, $\Lambda$ is a closed convex curve. We now proceed to obtain the affine curvature of $\Lambda$ using the following formula
$$\mu_{\Lambda}=\underbrace{\frac{x''y'''-x'''y''}{(x'y''-x''y')^{5/3}}}_{i}\underbrace{-\frac{1}{2}\left[\frac{1}{(x'y''-x''y')^{2/3}}\right]''}_{ii}.$$

We have
\begin{equation*}
i=\frac{\left(\frac{\cos\theta}{s^3}\right)'\left(\frac{\sin\theta}{s^3}\right)''-\left(\frac{\cos\theta}{s^3}\right)''\left(\frac{\sin\theta}{s^3}\right)'}
{\left(\frac{\sin\theta}{s^3}\left(\frac{\cos\theta}{s^3}\right)'-\left(\frac{\sin\theta}{s^3}\right)'\frac{\cos\theta}{s^3}\right)^{5/3}}
=s^2\left(3ss''+6s'^2+s^2\right)
\end{equation*}
and
\begin{equation*}
ii=-\frac{1}{2}\left(\frac{1}{\left(\frac{\sin\theta}{s^3}\left(\frac{\cos\theta}{s^3}\right)'-
\left(\frac{\sin\theta}{s^3}\right)'\frac{\cos\theta}{s^3}\right)^{2/3}}\right)''
=-2s^2\left(ss''+3s'^2\right).
\end{equation*}
Adding up $i$ and $ii$  gives $\displaystyle\mu_{\Lambda}=s^3\left(s_{\theta\theta}+s\right)=\frac{s^3}{\kappa}=\sigma^3.$
It is known, see for example \cite{Bushin}, that a centrally symmetric oval has at least 8 extatic points, i.e., points where $\mu_{\theta}=0$.
Therefore, $\sigma$ must have, at least, eight critical points.
\end{proof}
\begin{corollary}
If the even $L_{-2}$ Minkowski problem with smooth data $\Psi$ has a solution, then $\Psi$ must have $8$, or more, critical points on $[0, 2\pi]$.
\end{corollary}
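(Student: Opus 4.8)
The plan is to recognize that this corollary is a direct translation of Theorem~\ref{thm: necessary condition} into the language of the $L_{-2}$ Minkowski problem, so the real work has already been done. First I would recall the equivalence set up in the introduction: a solution of the smooth even $L_{-2}$ Minkowski problem with data $\Psi$ is a smooth, origin-symmetric, strictly convex curve $\gamma\subset\mathbb{R}^2$ whose support function $s$ satisfies $s(s_{\theta\theta}+s)^{1/3}=\Psi$. Since $1/\kappa=\mathfrak{r}=s_{\theta\theta}+s$, the left-hand side is exactly the affine support function $\sigma=s/\kappa^{1/3}=s(s_{\theta\theta}+s)^{1/3}$ of $\gamma$. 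Hence the hypothesis ``the even $L_{-2}$ problem with data $\Psi$ is solvable'' is precisely the statement $\sigma=\Psi$ for some smooth, origin-symmetric curve $\gamma$, the solution lying in the class $\mathcal{K}_{sym}$ of centrally symmetric bodies by the very formulation of the \emph{even} problem.

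With the reduction in place, I would simply apply Theorem~\ref{thm: necessary condition} to this $\gamma$. Because $\gamma$ is smooth and origin-symmetric, the theorem asserts that its affine support function $\sigma$, regarded as a function on $\mathbb{S}^1$, possesses at least eight critical points on $[0,2\pi]$. Substituting the identity $\sigma=\Psi$ then transfers this conclusion verbatim to $\Psi$, yielding the eight (or more) critical points claimed in the corollary.

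No new analytic estimate is required; all of the substance is absorbed into Theorem~\ref{thm: necessary condition}, whose proof recasts the critical points of $\sigma$ as the extatic points of the auxiliary convex curve $\Lambda$ built from $s^{-3}$ (via the computation $\mu_\Lambda=\sigma^3$) and then invokes the classical eight-extatic-point theorem for symmetric ovals. The only point deserving a word of care is ensuring that a solution of the \emph{even} problem genuinely furnishes the origin symmetry needed in the theorem's hypothesis; this is automatic since even data is addressed within $\mathcal{K}_{sym}$, and elliptic regularity for the ODE $s(s_{\theta\theta}+s)^{1/3}=\Psi$ guarantees that a positive solution with $s_{\theta\theta}+s>0$ is smooth. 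Beyond recording this reduction cleanly, I anticipate no genuine obstacle.
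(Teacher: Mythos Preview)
Your proposal is correct and matches the paper's approach exactly: the corollary is stated without proof in the paper, as it follows immediately from Theorem~\ref{thm: necessary condition} via the identification $\sigma=s(s_{\theta\theta}+s)^{1/3}=\Psi$ for any origin-symmetric solution, which is precisely the reduction you spell out.
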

\begin{proposition}
Let $\gamma_1$ and $\gamma_2$ be two smooth, origin-symmetric curves with support functions $s_1$ and $s_2$, respectively.
If $\sigma_{\gamma_1}\equiv\sigma_{\gamma_2}=:\Phi$, then there exists a special linear transformation, $T\in \textbf{SL}(2)$ such that
$$\gamma_2=T(\gamma_1).$$
Furthermore, identifying $\theta$ and $(\cos\theta,\sin \theta)$  we have
\begin{equation}\label{e: phi after trans}
\Phi(\theta)=\Phi\left(\frac{T^{-t}(\cos\theta,\sin \theta)}{||T^{-t}(\cos\theta,\sin \theta)||}\right).
\end{equation}
\end{proposition}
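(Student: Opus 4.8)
The plan is to treat the two assertions separately, disposing of the functional equation (\ref{e: phi after trans}) first, since it is a formal consequence of the equi-affine invariance of $\sigma$ once the map $T$ is in hand. Recall that for $T\in\textbf{SL}(2)$ both the affine arc-length element $d\mathfrak{s}=[\gamma_{\theta},\gamma_{\theta\theta}]^{1/3}d\theta$ and the bracket $[\,\cdot\,,\,\cdot\,]$ are preserved, so $\sigma=[\gamma,\gamma_{\mathfrak{s}}]$ is unchanged at corresponding points of $\gamma_1$ and $T\gamma_1$. A boundary point of $\gamma_1$ with outer unit normal $(\cos\theta,\sin\theta)$ is carried by $T$ to a boundary point of $\gamma_2=T\gamma_1$ whose outer normal is parallel to $T^{-t}(\cos\theta,\sin\theta)$; equating the common value $\Phi$ of the two affine support functions at these corresponding normals yields precisely (\ref{e: phi after trans}). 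Thus the whole content of the proposition is the \emph{existence} of $T$.

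To produce $T$, I would pass to the affine arc-length parametrization, where the theory is linear. Each $\gamma_i$ obeys the structure equations $[\gamma_{i,\mathfrak{s}},\gamma_{i,\mathfrak{s}\mathfrak{s}}]=1$, $\gamma_{i,\mathfrak{s}\mathfrak{s}\mathfrak{s}}=-\mu_i\gamma_{i,\mathfrak{s}}$, together with $\sigma_{i,\mathfrak{s}\mathfrak{s}}+\sigma_i\mu_i=1$ relating the affine support function to the affine curvature. The key observation is that the scalar operator $\frac{d^3}{d\mathfrak{s}^3}+\mu_i\frac{d}{d\mathfrak{s}}$ annihilates the constants, so its three-dimensional kernel is spanned by $1$ together with the two coordinate functions of $\gamma_i$. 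Consequently, \emph{if} the two curves can be arranged to carry the same affine curvature as a function of a common affine arc-length parameter, then the coordinates of $\gamma_1$ and $\gamma_2$ span the same solution space, forcing $\gamma_2=A\gamma_1+b$ for some $A\in\textbf{GL}(2)$ and $b\in\mathbb{R}^2$. The normalization $[\gamma_{2,\mathfrak{s}},\gamma_{2,\mathfrak{s}\mathfrak{s}}]=1=[\gamma_{1,\mathfrak{s}},\gamma_{1,\mathfrak{s}\mathfrak{s}}]$ then forces $\det A=1$, and the origin-symmetry of both curves (which interchanges antipodal affine arc-lengths and sends $\gamma_i\mapsto-\gamma_i$) forces $b=0$; hence $T:=A\in\textbf{SL}(2)$ and $\gamma_2=T\gamma_1$.

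The crux, and the step I expect to be the main obstacle, is exactly the synchronization used above: the data is prescribed through $\sigma=\Phi$ in the \emph{Gauss} parametrization $\theta$, whereas the linear machinery lives in the affine arc-length parametrization, the two being linked by $d\mathfrak{s}=\mathfrak{r}^{2/3}d\theta$ with $\mathfrak{r}=(\Phi/s)^{3}$. Since $s$ is tied to $\Phi$ only through the nonlinear relation $s(s_{\theta\theta}+s)^{1/3}=\Phi$, equality of $\sigma$ as functions of $\theta$ does \emph{not} immediately give equality of $\mu_i$ as functions of $\mathfrak{s}$, and this nonlinear rigidity must be broken. I would attack it by comparing the two support functions directly: setting $v:=s_2-s_1$ and subtracting the two copies of $s(s_{\theta\theta}+s)^{1/3}=\Phi$ produces the \emph{exact} linear equation
\begin{equation*}
v_{\theta\theta}+\left(1+\Phi^{3}\,\frac{s_1^{2}+s_1 s_2+s_2^{2}}{s_1^{3}s_2^{3}}\right)v=0,
\end{equation*}
a Hill-type equation whose potential is strictly larger than $1$ and for which $v$ is $\pi$-periodic. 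Sturm comparison with $v_{\theta\theta}+v=0$ controls the nodal structure of $v$, and together with origin-symmetry and the critical-point count of Theorem~\ref{thm: necessary condition} this should confine $v$ to the tangent directions generated by the $\textbf{SL}(2)$-action, matching $\gamma_1$ and $\gamma_2$ within a single orbit.

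Finally, I would record the special but instructive case $\Phi\equiv\mathrm{const}$ as a sanity check: there the auxiliary curve $\Lambda$ from the proof of Theorem~\ref{thm: necessary condition} has constant affine curvature $\mu_{\Lambda}=\Phi^{3}$, hence is an ellipse, and one recovers directly that the solution set is exactly the $\textbf{SL}(2)$-orbit of ellipses. This both confirms that $\textbf{SL}(2)$ (and not some smaller group) is the correct ambiguity and exhibits, in a computable instance, the matching of affine curvatures that the general argument must establish.
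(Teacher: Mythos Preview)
Your reduction of (\ref{e: phi after trans}) to the equi-affine invariance of $\sigma$ is fine and matches what the paper does at the end. The difficulty, as you correctly locate, is producing $T$, and here your argument does not close.

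You derive the Hill-type equation for $v=s_2-s_1$ correctly, but the assertion that Sturm comparison together with Theorem~\ref{thm: necessary condition} ``should confine $v$ to the tangent directions generated by the $\textbf{SL}(2)$-action'' is not justified. Comparison against $v_{\theta\theta}+v=0$ only bounds the number of zeros of $v$ from \emph{below}; it gives no mechanism forcing $v$ into any particular finite-dimensional subspace, let alone the three-dimensional tangent space to the $\textbf{SL}(2)$-orbit through $s_1$. And even if $v$ did lie in that tangent space, that is an infinitesimal statement, not the global conclusion that $\gamma_2$ sits on the orbit of $\gamma_1$. So the synchronization obstacle you identify is genuine, but the proposed attack does not remove it.

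The paper sidesteps the synchronization problem altogether by reusing the auxiliary curves
\[
\Lambda_i(\theta)=\Bigl(\int_0^\theta \frac{\cos\alpha}{s_i^3(\alpha)}\,d\alpha,\ \int_0^\theta \frac{\sin\alpha}{s_i^3(\alpha)}\,d\alpha\Bigr)
\]
from the proof of Theorem~\ref{thm: necessary condition}, where it was computed that $\kappa_{\Lambda_i}=s_i^3$ and $\mu_{\Lambda_i}=\sigma_{\gamma_i}^3$. The hypothesis $\sigma_{\gamma_1}\equiv\sigma_{\gamma_2}$ thus becomes $\mu_{\Lambda_1}\equiv\mu_{\Lambda_2}$, and the classical rigidity theorem yields $\Lambda_2=T\Lambda_1$ for some $T\in\textbf{SL}(2)$. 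The transfer back to the $\gamma_i$ goes through the \emph{Euclidean} curvature of the $\Lambda$'s rather than through any matching of affine arc-lengths of the $\gamma$'s: the transformation rule $\kappa_{\Lambda_1}(x)=\|T^{-t}\vec{\mathbf n}_{\Lambda_1}(x)\|^{3}\kappa_{\Lambda_2}(Tx)$ together with $\kappa_{\Lambda_i}=s_i^3$ gives precisely
\[
s_1(\vec{\mathbf n}_1)=\|T^{-t}\vec{\mathbf n}_1\|\, s_2\!\left(\frac{T^{-t}\vec{\mathbf n}_1}{\|T^{-t}\vec{\mathbf n}_1\|}\right),
\]
which is the support-function identity characterizing $\gamma_2=T\gamma_1$. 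The idea you are missing is this $\Lambda$ construction: it converts the affine support function of $\gamma$ into the affine curvature of an auxiliary convex curve, where the classical uniqueness theorem applies directly and the result can be pulled back via $\kappa_\Lambda=s^3$.
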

\begin{proof}
It is well-known that affine curvature determines a curve uniquely up to an equiaffine transformation of the plane, \cite{NS}. Define
$$\Lambda_1(\theta):=\left(\int_{0}^{\theta}\frac{\cos\alpha}{s_1^3(\alpha)}~d\alpha,\int_{0}^{\theta}\frac{\sin\alpha}{s_1^3(\alpha)}~d\alpha\right),
\Lambda_2(\theta):=\left(\int_{0}^{\theta}\frac{\cos\alpha}{s_2^3(\alpha)}~d\alpha,\int_{0}^{\theta}\frac{\sin\alpha}{s_2^3(\alpha)}~d\alpha\right).
$$

Since $\mu_{\Lambda_1}\equiv \mu_{\Lambda_1}$, there exists a special linear transformation $T\in \textbf{SL}(2)$ such that
$$\Lambda_2=T(\Lambda_1).$$
Let $\vec{\textbf{n}}_{\Lambda_1}$ and $\vec{\textbf{n}}_{\Lambda_2}$ denote the unit normal to, respectively, $\Lambda_1$ and $\Lambda_2$.
Therefore, for any $x\in \Lambda_1$
$$\kappa_{\Lambda_1}(x)=||T^{-t}(\vec{\textbf{n}}_{\Lambda_1}(x))||^3\kappa_{\Lambda_2}(T(x)).$$
On the other hand, using $\kappa_{\Lambda_1}(x)=s_1^3(x)$ and $\kappa_{\Lambda_2}(T(x))=s_2^3(T(x))$, we obtain that
\begin{equation}\label{e: support after trans}
s_1^3(x)=||T^{-t}(\vec{\textbf{n}}_{\Lambda_1}(x))||^3s_2^3(T(x)).
\end{equation}
To prove the corollary we need to rewrite the  equation (\ref{e: support after trans}) on the unit circle. Toward that end, we observe the following relation between
$\vec{\textbf{n}}_{\Lambda_1}$ and $\vec{\textbf{n}}_{\Lambda_2}$:
 $$\vec{\textbf{n}}_{\Lambda_2}=\frac{T^{-t}(\vec{\textbf{n}}_{\Lambda_1})}{||T^{-t}(\vec{\textbf{n}}_{\Lambda_1})||}.$$
Thus, $s_1(\vec{\textbf{n}}_1)=||T^{-t}(\vec{\textbf{n}}_1)||s_2(\vec{\textbf{n}}_2),$ where $\vec{\textbf{n}}_1, \vec{\textbf{n}}_2\in\mathbb{S}^1$
and $\vec{\textbf{n}}_2=\frac{T^{-t}(\vec{\textbf{n}}_1)}{||T^{-t}(\vec{\textbf{n}}_1)||}.$ This completes the proof of the first part. The proof of equation (\ref{e: phi after trans}) also follows from the above observations.
\end{proof}
\noindent \textbf{Remark:} \medskip \\
Suppose that the even $L_{-2}$ Minkowski problem is solvable for $\Phi$. Equivalently, there exists a curve $\gamma$ such that $\frac{s^3}{\kappa}=\Phi$. If $\Phi(\theta)=\Phi\left(\frac{T^{-t}(\cos\theta,\sin \theta)}{||T^{-t}(\cos\theta,\sin \theta)||}\right)$ then it is easy to show that $T(\gamma)$ also solves the even $L_{-2}$ Minkowski problem corresponding to $\Phi.$
This fact and the previous Corollary imply that any curve in
$$\left\{T(\gamma);~ T\in \textbf{SL}(2,\mathbb{R})~ \mbox{and}~ \Phi(\theta)=\Phi\left(\frac{T^{-t}(\cos\theta,\sin \theta)}{||T^{-t}(\cos\theta,\sin \theta)||}\right)\right\}$$
solves $\frac{s^3}{\kappa}=\Phi$ and these are all the possible solutions.

\section{Conclusions}

We will recall first some results of Ai, Chou and Wei,  \cite{Chou1}, who employed a different sufficiency condition in their study of the $L_{-2}$ problem.\\
Let $\Phi:\mathbb{S}^1\to \mathbb{R}$ be a smooth positive function. Define
$$B(x,\Phi):=\int_{0}^{\pi}\frac{\Phi(x+t)-\Phi(x)-2^{-1}\Phi'(x)\sin(2t)}{\sin^2t}dt.$$
If $B(x)\neq 0$ at any critical point of $\Phi$ then, we say $\Phi$ is $B$-nondegenerate.\\
\textbf{Theorem A} \cite{Chou1} Assume that $\Phi$ is a positive, $B$-nondegenerate, $C^2$ function of period $\pi.$ Then there exist a constant $C$ which depends only on $\Phi$ such that
$$0<C^{-1}\leq u(x)\leq C,\ \ \ {\hbox{and}}\ \ \  ||u||_{H^1(\mathbb{S}^1)}\leq C,$$ for any solution $u$ of the $L_{-2}$ Minkowski corresponding to $\Phi.$\\
\textbf{Theorem B} \cite{Chou1} Assume that $\Phi$ is a positive, $B$-nondegenerate, $C^2$ function of period $\pi.$ Then $L_{-2}$ Minkowski problem with data $\Phi$
is solvable  if the winding number of the map
$$x\mapsto(-B(x), \Phi'(x)),~~x\in[0,\pi)$$
around the origin is not equal to $-1$.\\
\textbf{Lemma 1.5.} (Kazdan-Warner type obstruction) \cite{Chou1} For any solution $u$ of the $L_{-2}$ Minkowski problem corresponding to $\Phi$, we have
$$\int_{0}^{\pi}\frac{\Phi'(x)\alpha(x)}{u^2(x)}dx=0$$
where $\alpha$ is in the set $\{1,\cos 2x,\sin 2x\}$.

Define $C:=\{\Phi\in C^{\infty}_{even}(\mathbb{S}^1); \Phi>0~ \mbox{and}~ \exists u: u''+u=\frac{\Phi}{u^3}\}.$ Then by our the main theorem, $S$ is dense in $D:=\{\Phi\in C^{\infty}_{even}(\mathbb{S}^1), \Phi>0\}$ with respect to the $L^{\infty}$ norm. By Corollary \ref{thm: necessary condition}, or the Kazdan-Warner type obstruction, if $\Phi$ is only $\pi$ periodic, then it is possible that the corresponding $L_{-2}$ is not solvable. A simple example is provided by $\Phi(\theta)=2+\cos(2\theta).$ For any non-solvable $\Phi$, Theorem A and Theorem \ref{thm: main1} imply that there exists a family of convex bodies such that their corresponding affine support functions are $B$-degenerate while approaching $\Phi$ in the $L^{\infty}$ norm. Therefore, the $B$-non degeneracy of $\Phi$ is \emph{not} a necessary condition for the existence of a solution to the $L_{-2}$ Minkowski problem. Moreover, note that by a result of Guggenheimer, \cite{guggenheimer}, the above lemma also implies that $\Psi_{\theta}$ has, at least, $8$ zeroes in $[0,2\pi]$, hence 8 critical points is not a sufficient condition as Kazdan-Warner type obstruction is not a sufficient condition.\\\\

\textbf{Acknowledgment.}
The author is immensely indebted to Alina Stancu, for her encouragements, comments and suggestions throughout this work.

\bibliographystyle{amsplain}

\end{document}